\renewcommand{\Sh}{\mathrm{Sh}}
\newcommand{\Zero}{\mathrm{Zero}}
\renewcommand{\Mod}{\mathrm{Mod}}
\newcommand{\forlim}{{\text{``${\displaystyle{\lim}}$''}}}
\newcommand{\musupp}{\mu\mathrm{supp}}
\newcommand{\shbarr}{{\footnotesize{\saturn}}}
\newcommand{\hbarr}{{\saturn}}
\newcommand{\potimes}{\overset{+}{\otimes}}
\newcommand{\Sol}{\mathrm{Sol}}
\title{Sheaf quantization from exact WKB analysis}
\author{Tatsuki Kuwagaki}
\date{}
\begin{document}

\maketitle
\begin{abstract}
A sheaf quantization is a sheaf associated to a Lagrangian brane. By using the results of exact WKB analysis, we sheaf-quantize spectral curves over the Novikov ring under some assumptions on the behavior of Stokes curves. 

For Schr\"odinger equations, we prove that the local system associated to the sheaf quantization (microlocalization a.k.a. abelianization) over the spectral curve can be identified with the Voros--Iwaki--Nakanishi coordinate.

We expect that these sheaf quantizations are the object-level realizations of the $\hbar$-enhanced Riemann--Hilbert correspondence.

\end{abstract}
\setcounter{tocdepth}{1}
%\tableofcontents
\section{Introduction}
A quantization of a Lagrangian submanifold $L$ usually means a module supported on $L$ over a quantized symplectic manifold. For example, a $\cD$-module on a complex manifold is a quantization of its characteristic variety. Via the Riemann--Hilbert correspondence, one can introduce a notion of quantization with more topological flavor: {\em sheaf quantization}. A sheaf is a sheaf quantization of its microsupport $\SS$~\cite{KS}.

Characteristic varieties and microsupports are always conic. To quantize non-conic Lagrangians, it is classical to use $\cD^\hbar$-modules. A $\cD^\hbar$-module $\cE$ is a quantization of $\supp(\cE/\hbar\cE)$. Tamarkin introduced an analogue of this trick on the Riemann--Hilbert dual side~\cite{Tam}. Namely, for a real manifold $M$, consider the product $T^*M\times T^*_{\tau>0}\bR_t$ where $\bR_t$ is the real line with the standard coordinate $t$, $\tau$ is the dual cotangent coordinate of $t$, and $T^*_{\tau>0}\bR_t$ is the subset of $T^*\bR_t$ defined by $\tau>0$. Let $\rho$ be a map defined by $\rho\colon T^*M\times T^*_{\tau>0}\bR_t\rightarrow T^*M; (x, \xi, t,\tau)\mapsto (x, \xi/\tau)$ where $x\in M$ and $\xi\in T^*_xM$. Roughly speaking, we say that a sheaf $\cE$ on $M\times \bR_t$ is a {\em sheaf quantization} of $\musupp(\cE):=\rho(\SS(\cE)\cap (T^*M\times T^*_{\tau>0}\bR_t))$. This idea is quite useful to study symplectic topology. For example, Tamarkin proved some results on Hamiltonian non-displaceablity~\cite{Tam} and Guillermou proved some results on the nearby Lagrangian conjecture~\cite{Guillermou}. The statements obtained by sheaf quantizations are parallel to those obtained by Floer-theoretic methods. In fact, one of Tamarkin's first motivations to introduce $\bR_t$ is to realize the Novikov ring $\Lambda_0$ in sheaf theory~\cite[p.2, Remark]{Tam}. Also, Guillermou~\cite{guillermou2015quantization} proved that a sheaf quantization is associated to an exact Lagrangian brane, which is nothing but a necessary condition to be an object of Fukaya category.

In the first version of quantization (using $\cD$-modules), via the Riemann--Hilbert correspondence, a sheaf quantization over a complex manifold is equivalent to a quantization by a regular holonomic $\cD$-module. However, in the second version (using $\cD^\hbar$-modules), there is no such Riemann--Hilbert correspondence, so the relation is not clear.

One of the purpose of this paper is to reveal a part of the relationship explicitly, using exact WKB analysis. We will explore it more abstractly by constructing a functor relating $\cD^\hbar$-modules and sheaf quantizations in the subsequent publication~\cite{hRH}

Naively speaking, the Riemann--Hilbert functor is about solving differential equations. Similarly, in our case, we have to solve $\cD^\hbar$-modules. The appropriate method is {\em exact WKB analysis}~\cite{Voros, DDP, KawaiTakei}. Classically, the WKB approximation means a semi-classical approximation of quantum mechanics obtained by considering the Planck constant very small. Exact WKB analysis is a method to solve differential equations with $\hbar$ along this line. We solve a given differential equation with special formal power series (WKB solutions) in $\hbar$ and try to sum up the series by Borel's method. 

Exact WKB analysis is quite successful for the second order differential equations over complex 1-dimensional spaces. The result is, roughly speaking, for each $\hbarr\in \bC^\times_\hbar$, we have a chamber decomposition of the base space, solutions over each chamber, and connection formulas between solutions in different chambers. For us, these data give a gluing of sheaves.

\begin{theorem}
Let $C$ be a Riemann surface and $\cM$ be a rank 2 flat meromorphic $\hbar$-connection with Assumption~\ref{ourassumption}. For a sufficiently small and generic $\hbarr \in \bC^{\times}_\hbar$, there exists a sheaf quantization $S_{\cM}^{\shbarr}$ of the spectral curve of $\cM$ as an object of $\Sh_{\tau>0}^\bR(M\times \bR_t)$ and the $0$-th cohomology of its microlocalization is the Voros local system.
\end{theorem}
The category $\Sh^\bR_{\tau>0}(M\times \bR_t)$ is the $\bR$-equivariant version of the category $\Sh_{\tau>0}(M\times \bR_t)$ introduced by Tamarkin~\cite{Tam}. In the previous works~\cite{Guillermou, JT}, to sheaf-quantize a Lagrangian, we have to assume that the Lagrangian is exact. Since spectral curves are not exact, the previous framework cannot be applied to our situation. One of the technical advances of this paper is to use $\bR$-equivariant sheaves to treat nonexact Lagrangians. 
This introduction of equivariant sheaves naturally induces the Novikov ring action in sheaf theory as follows:
\begin{lemma}[Lemma~\ref{lem:Novcomputation}]
    The endomorphism ring of the monoidal unit of the equivariant version of Tamrkin's category is isomorphic to the Novikov ring.
\end{lemma}
This is the first direct observation of the Novikov ring in sheaf theory, although which has been anticipated since Tamarkin's work.
We will use this Novikov ring to study symplectic topology via sheaf theory in the subsequent work~\cite{IK}. It is also very interesting to have the Novikov ring in the context of exact WKB analysis, since a similar structure has already appeared in the theory under the name of {\em transseries}. A precise relationship between these two notions should be clarified in a future research.

As we have already mentioned, we expect our construction is a kind of Riemann--Hilbert correspondence, {\em $\hbar$-Riemann--Hilbert correspondence}. In the subsequent work~\cite{hRH}, we construct a functor from the category of $\cD^\hbar$-modules to the category of sheaf quantizations, although the coefficients we take is enlarged to obtain the Novikov rings on the side of $\cD^\hbar$-modules. For the second order case, our sheaf quantization here can be comparable to the sheaf quantization obtained via the $\hbar$-RH functor. For the higher order case, we expect that a similar coincidence is possible after exact WKB analysis is settled.

One can also ask relations to other realizations of irregular Riemann--Hilbert correspondence. We will discuss relationships to (1) D'Agnolo--Kashiwara's holonomic Riemann--Hilbert correspondence~\cite{DK}, and (2) Shende--Treumann--Williams--Zaslow's Legendrian knot description~\cite{STWZ}.

\begin{remark}
Microlocal sheaf theory has already met exact WKB analysis in the work of Getmanenko--Tamarkin \cite{GetmanekoTamarkin}. In contrast to our treatment, they applied microlocal sheaf theory to the problem after the Laplace transform. Hence their solution sheaf corresponds to Borel-transformed solutions. It may be interesting to know the relationship between our sheaves and their sheaves.
\end{remark}

\begin{remark}
Recently, Kontsevich--Soibelman~\cite{KonTalk, KontsevichSoibelman} announced a formulation of Riemann--Hilbert correspondence for deformation quantization. From this point of view, our construction (or our theorem in \cite{hRH}) can be considered as an intermediate step. The other step to prove Kontsevich--Soibelman's conjecture should be a version of Nadler--Zaslow equivalence~\cite{NZ}.
\end{remark}

\subsection*{Acknowledgment}
I would like to thank Kohei Iwaki, Tsukasa Ishibashi, Hiroshi Ohta, and Vivek Shende for many comments and discussions. Especially, Iwaki-san asked me to find relationship between exact WKB analysis and microlocal sheaf theory for some years. I am very happy to write this note as my ``homework report" of his lectures. I also would like to thank the anonymous referees for many comments.
This work was partially supported by World Premier International Research Center Initiative (WPI), MEXT, Japan
and JSPS KAKENHI Grant Number JP18K13405.

\subsection*{Notation}
\begin{enumerate}
    \item (Base field) Our base field is $\bK$. In a large part of this paper, $\bK$ is $\bC$.
    \item (Novikov ring) Let $R$ be a commutative ring. Consider $\bR_{\geq 0}$ as a semigroup and we denote the group ring by $R[\bR_{\geq0}]$. We denote the indeterminate of $R[\bR_{\geq 0}]$ by $T^a$ for $a\in \bR_{\geq 0}$. The Novikov ring $\Lambda^R_0$ is the projective limit \[\lim_{\substack{\longleftarrow \\a\rightarrow \infty}}R[\bR_{\geq 0}]/T^aR[\bR_{\geq 0}]\]. We denote the fraction field by $\Lambda^R$ and call it the Novikov field. If the context is clear, we omit the superscript $R$.
    \item (Category) A category usually means a dg-category unless specified. When dg-categories are involved, all the operations are understood to be derived.
    
    \item (Planck) We will use $\hbar$ for a general element or the coordinate function of $\bC^\times_\hbar$ and $\hbarr$ for some fixed element in $\bC_\hbar$.
    
    \item (Sheaf) Let $M$ be a topological space. For us, {\em sheaf} means {\em sheaf of $\bK$-vector spaces}.  For a locally closed subset $Z$ of $M$, we use $\bK_Z$ to be the constant sheaf of rank 1 supported on $Z$. If $Z$ is defined by some inequality $I$, we set $\bK_I:=\bK_Z$.
    
    \item (Complex cotangent bundle) For a complex manifold $M$, the cotangent bundle $T^*M$ is a holomorphic symplectic manifold by the standard symplectic structure. We denote the projection $T^*M\rightarrow M$ by $p_{T^*M}$.
     
    On a local coordinate $(U, \{z_i\})$, the standard symplectic form $\omega$ can be written as $\sum_i d\zeta_i\wedge dz_i$ where $\zeta_i$ is the cotangent coordinate dual to $z_i$. The form $\sum_i\zeta_i dz_i$ gives a global 1-form $\lambda$ on $T^*M$. We call $\lambda$ the Liouville form. 
     \item (Real cotangent bundle) Similarly, for a manifold $M$, the cotangent bundle $T^*M$ is a symplectic manifold by the standard exact symplectic structure. We denote the standard real symplectic form by $\omega_{st}$, and the standard Liouville form by $\lambda_{st}$.
\end{enumerate}

\section{Microlocal category}
Here we will introduce our category which is the living place of our sheaf quantizations. This category is a refined version of the category introduced by Tamarkin~\cite{Tam}. We will generalize the presentation of Guillermou--Schapira~\cite{GS}.

\subsection{Positively-microsupported sheaves}
Let $M$ be a connected differentiable manifold and $\bR_t$ be the real line with the standard coordinate $t$. We denote the derived category of $\bK$-module sheaves by $\Sh(M\times \bR_t)$. 

We denote the $i$-th projection by $\bR_t\times \bR_t\rightarrow \bR_t$. We also have the induced map $\id_M\times p_i\colon M\times \bR_t^2\rightarrow M\times \bR_t$ where $\id_M$ is the identity map of $M$. We also denote the addition map $\bR_t\times \bR_t\rightarrow \bR_t$ by $m_t$.

For objects $\cE, \cF\in \Sh(M\times \bR_t)$, we set
\begin{equation}
    \cE\star \cF:=(\id_M\times m_t)_!((\id_M\times p_1)^{-1}\cE\otimes (\id_M\times p_2)^{-1}\cF).
\end{equation}
We call it the convolution product.

We denote the cotangent bundle of $M$ by $T^*M$. Let $\bR_t$ be the real line with the standard coordinate $t$. The dual cotangent coordinate of $T^*\bR_t$ is denoted by $\tau$. The subset $\{\tau>0\}:=T^*_{\tau>0}(M\times \bR_t)=T^*M\times T^*_{\tau>0}\bR_t$ of $T^*M\times T^*\bR_t$ is defined by $\tau>0$.

We define the full subcategory $\Sh_{\tau\leq 0}(M\times \bR_t)$ of $\Sh(M\times \bR_t)$ as the full subcategory spanned by the objects satisfying $\SS(\cE)\subset \{\tau\leq 0\}$.

\begin{lemma}\label{lemma:nonequivvanishing}
For $\cE\in \Sh_{\tau\leq 0}(M\times \bR_t)$, we have $\cE\star \bK_{t\geq 0}\simeq 0$.
\end{lemma}
\begin{proof}
    This is a special case of \cite[Proposition 3.19 (b)]{GS}.
\end{proof}
We consider the skyscraper sheaf $\bK_{M\times \{t=0\}}$.
\begin{lemma}\label{lem:nonequivunit}
For $\cE\in \Sh_{\tau\leq 0}(M\times \bR_t)$, we have $\cE\star \bK_{M\times\{t=0\}}\cong \cE$. 
\end{lemma}
\begin{proof}
    This is a special case of \cite[p.25]{GS}.
\end{proof}
By the adjunction $\Hom(\bK_{t\geq 0}, \bK_{t=0})\cong \Hom(\bK_{t=0}, \bK_{t=0})$, we have the map $\bK_{t\geq 0}\rightarrow \bK_{t=0}$ corresponding to the identity.
By Lemma~\ref{lem:nonequivunit}, we have a morphism $\cE\star\bK_{t\geq 0}\rightarrow \cE$ for any $\cE\in \Sh(M\times \bR_t)$.
\begin{lemma}\label{lem:nonequivdecomposition}
The cone of the above morphism is in ${}^\perp\Sh_{\tau\leq 0}(M\times \bR_t)$. Moreover, the distinguished triangle gives a semi-orthogonal decomposition:
\begin{equation}
    \Sh(M\times \bR_t)\cong \la\Sh_{\tau\leq 0}(M\times \bR_t), {}^\perp\Sh_{\tau\leq 0}(M\times \bR_t)\ra.
\end{equation}
\end{lemma}
\begin{proof}
    This is a direct consequence of \cite[Proposition 3.21]{GS}.
\end{proof}
We set
\begin{equation}
    \Sh_{\tau>0}(M\times \bR_t):=\Sh(M\times \bR_t)/\Sh_{\tau\leq 0}(M\times \bR_t)
\end{equation}
where $/$ means the Drinfeld--Verdier dg-quotient.
We denote the quotient functor $\Sh(M\times \bR_t)\rightarrow \Sh_{\tau>0}(M\times \bR_t)$ by $q$.

By Lemma~\ref{lemma:nonequivvanishing}, the functor $(-)\star\bK_{t\geq 0}$ induces a functor $\iota_{>0}\colon \Sh_{\tau>0}(M\times \bR_t)\rightarrow \Sh(M\times \bR_t)$.
\begin{lemma}
    The functor $\iota_{>0}$ is fully faithful embedding, and gives an an equivalence onto ${}^\perp\Sh_{\tau\leq 0}(M\times \bR_t)$.
\end{lemma}
\begin{proof}
    This is a special case of \cite[Proposition 3.21]{GS}.
\end{proof}

\subsection{Equivariant sheaves}

For $c\in \bR$, we set
\begin{equation}
    T_c\colon M\times \bR_t\rightarrow M\times \bR_t; x\mapsto x+c.
\end{equation}
The isomorphisms $\{T_c\}_{c\in \bR}$ gives an action of $\bR$ on $M\times \bR_t$. Here we equip the group $\bR$ with the discrete topology.

\begin{definition}
An equivariant sheaf on $M\times \bR_t$ consists of the following data:
\begin{enumerate}
    \item a sheaf $\cE$ on $M\times \bR_t$,
    \item an isomorphism $\alpha_c\colon \cE\xrightarrow{\cong} T_c\cE:=T_{c*}\cE$ for any $c\in \bR$ 
\end{enumerate}
such that $T_{c*}(\alpha_{c'})=\alpha_{c+c'}$ for any $c, c '\in \bR$. A morphism between equivariant sheaves is a morphism of sheaves compatible with $\alpha_c$'s.
\end{definition}

Let $\Sh^\bR_{\heartsuit}(M\times \bR_t)$ be the abelian category of $\bR$-equivariant sheaves and $\Sh^\bR(M\times \bR_t)$ be the derived dg-category of $\Sh^\bR_\heartsuit(M\times \bR_t)$. Note that the equivariant derived category in the sense of Bernstein--Lunts~\cite{BernsteinLunts} coincide with the naive derived category in this case, since $\bR$ is a discrete group.

We have the forgetful functor
\begin{equation}
    \frakf\colon \Sh^\bR(M\times \bR_t)\rightarrow \Sh(M\times \bR_t).
\end{equation}

\begin{definition}
    A functor $F\colon \cA\rightarrow \cB$ between additive categories is conservative if $F(a)\simeq 0$ implies that $a\simeq 0$.
\end{definition}

\begin{lemma}
    Let $F\colon \cA\rightarrow \cB$ be an exact conservative functor between triangulated categories. Then $f\colon \cE\rightarrow \cF$ in $\cA$ is an isomorphism if $F(f)$ is an isomorphism.
\end{lemma}
\begin{proof}
    Suppose $F(f)$ is an isomorphism. Then $F(\Cone(f))=\Cone(F(f))\simeq 0$. By the conservativity, $\Cone(f)\simeq 0$. This completes the proof.
\end{proof}

\begin{lemma}\label{lemma:conservative}
The forgetful functor $\frakf$ is conservative.
\end{lemma}
\begin{proof}
For $\cE\in \Sh^\bR(M\times \bR_t)$, if $\frakf(\cE)\simeq 0$, then $\frakf(\cE)\rightarrow 0$ is a quasi-isomorphism. Since 
 this morphism can be lifted to $\cE\rightarrow 0$, we obtain the desired result.
\end{proof}

For an object $\cE\in \Sh(M\times \bR_t)$, the direct sum $\bigoplus_{c\in \bR} T_c\cE$ has an obvious equivariant structure
\begin{equation}
    \alpha_{c'}\colon \bigoplus_{c\in \bR} T_c\cE\xrightarrow{\id} \bigoplus_{c\in \bR} T_c\cE=T_{c'}\bigoplus_{c\in \bR} T_c\cE.
\end{equation}
The assignment
\begin{equation}
    \cE\mapsto  \frakf^L(\cE):=\bigoplus_{c\in \bR} T_c\cE
\end{equation}
defines a functor $\frakf^L\colon \Sh(M\times \bR_t)\rightarrow \Sh^\bR(M\times \bR_t)$.
\begin{proposition}
    The functor $\frakf^L$ is the left adjoint of $\frakf$.
\end{proposition}
\begin{proof}
    Since the both functors are induced from exact functors between abelian categories, it is enough to show that the adjoint holds on the abelian level. We denote the abelian version of the functors by
    \begin{equation}
        \frakf_\heartsuit\colon \Sh^\bR_{\heartsuit}(M\times \bR_t)\substack{\leftarrow\\ \rightarrow}\Sh_\heartsuit(M\times \bR_t)\colon \frakf^L_\heartsuit.
    \end{equation}
    For objects $\cE\in \Sh^\bR_{\heartsuit}(M\times \bR_t)$ and $\cF\in \Sh_{\heartsuit}(M\times \bR_t)$, we have
    \begin{equation}
        \begin{split}
            \Hom_{\Sh^\bR_{\heartsuit}(M\times \bR_t)}(\frakf^L_\heartsuit\cF, \cE)&\cong \Hom_{\Sh^\bR_{\heartsuit}(M\times \bR_t)}(\bigoplus_{c\in \bR}T_c\cF, \cE).
        \end{split}
    \end{equation}
We would like to compute the right hand side. First, we have a morphism
\begin{equation}
    \Hom_{\Sh^\bR_{\heartsuit}(M\times \bR_t)}(\bigoplus_{c\in \bR}T_c\cF, \cE)\rightarrow \Hom_{\Sh_{\heartsuit}(M\times \bR_t)}(\bigoplus_{c\in \bR}T_c\cF, \frakf(\cE))
\end{equation}
induced by $\frakf$. Next, we consider the pull-back along the inclusion $\cF=T_0\cF\hookrightarrow \bigoplus_{c\in \bR}T_c\cF$:
\begin{equation}
    \Hom_{\Sh_{\heartsuit}(M\times \bR_t)}(\bigoplus_{c\in \bR}T_c\cF, \frakf(\cE))\rightarrow \Hom_{\Sh_{\heartsuit}(M\times \bR_t)}(\cF, \frakf(\cE)).
\end{equation}
Composing these two morphisms, we obtain a morphism
\begin{equation}
    C\colon \Hom_{\Sh^\bR_{\heartsuit}(M\times \bR_t)}(\bigoplus_{c\in \bR}T_c\cF, \cE)\rightarrow \Hom_{\Sh_{\heartsuit}(M\times \bR_t)}(\cF, \frakf(\cE)).
\end{equation}
To complete the proof, it is enough to check that this is an isomorphism. For any element $f\in \Hom_{\Sh_{\heartsuit}(M\times \bR_t)}(\cF, \frakf(\cE))$, we set 
\begin{equation}
    f_c:=\alpha_c^{-1}\circ T_{c*}(f)\colon T_c\cF\xrightarrow{} T_c\cE\xrightarrow{\alpha_c^{-1}}\cE.
\end{equation}
Then $\prod_cf_c\colon \bigoplus_{c\in \bR} T_c\cF\rightarrow \cE$ defines an element of $\Hom_{\Sh^\bR_{\heartsuit}(M\times \bR_t)}(\bigoplus_{c\in \bR}T_c\cF, \cE)$ such that $C(\prod_cf_c)=f$. This construction gives the inverse of $C$. This completes the proof.
\end{proof}

For an object $\cE\in \Sh(M\times \bR_t)$, the microsupport $\SS(\cE)$ is defined by \cite{KS}. Although, \cite{KS} defined it for the bounded case, it is well-known that the definition works well for the unbounded case.

\begin{definition}[Microsupport]
For an object $\cE\in \Sh^\bR(M\times \bR_t)$, we set
\begin{equation}\label{eq:definitionofequivariantSS}
    \SS(\cE):=\SS(\frakf(\cE)).
\end{equation}
\end{definition}

\subsection{Operations for equivariant sheaves}
We first recall some basic operations for equivariant sheaves. For details, we refer to \cite{BernsteinLunts, Grothendieck}.

Let $G$ be a group. Let $X_1, X_2$ be $G$-spaces. Let $f\colon X_1\rightarrow X_2$ be a $G$-map. Then we have functors $f^G_*, f^G_!\colon \Sh^G(X_1)\rightarrow \Sh^G(X_2),
    f_G^{-1}, f_G^! \colon \Sh^G(X_2)\rightarrow \Sh^G(X_1)$.
Standard adjunctions hold for these functors.

Let $G_1, G_2$ be groups. Let $X_i$ be $G_i$-spaces for $i=1,2$. We then have the tensor product functor
\begin{equation}
    \boxtimes^{G_1\times G_2}\colon \Sh^{G_1}(X_1)\times \Sh^{G_2}(X_2)\rightarrow \Sh^{G_1\times G_2}(X_1\times X_2).
\end{equation}

We consider $M\times \bR_t^2$ on which $\bR^2$ acts by the addition on each component.
Through the addition $m\colon \bR^2\rightarrow \bR$, the group $\bR^2$ also acts on $M\times \bR_t$. The kernel of the map $m\colon\bR^2\rightarrow \bR$ is the anti-diagonal $\Delta_a:=\{(a, -a)\in \bR\times \bR\}$.

We consider the addition map $\id_M\times m_t\colon M\times \bR_t\times \bR_t\rightarrow M\times \bR_t$ on $\bR_t$-factors. We then have a functor
\begin{equation}
    (\id_M\times m_t)^{\bR^2}_!\colon \Sh^{\bR^2}(M\times \bR_t^2)\rightarrow\Sh^{\bR^2}(M\times \bR_t).
\end{equation}

Suppose $\phi\colon G\rightarrow H$ be a surjective group homomorphism. Let $Y$ be an $H$-space. Then $G$ acts on $Y$ through $\phi$. Let $K$ be the kernel of $G\rightarrow H$. Then we have the invariant functor
\begin{equation}
    (-)^K\colon \Sh^G(Y)\rightarrow \Sh^H(Y)
\end{equation}
and the coinvariant functor
\begin{equation}
    (-)_K\colon \Sh^G(Y)\rightarrow \Sh^H(Y).
\end{equation}

Also, if one has an $H$-equivariant sheaf on $Y$, it can also be considered as a $G$-equivariant sheaf:
\begin{equation}
    \iota^\phi\colon \Sh^H(Y)\rightarrow \Sh^G(Y).
\end{equation}
The following is a standard fact.
\begin{lemma}
$\iota^\phi$ is the right adjoint of $(-)_K$ and the left adjoint of $(-)^K$.
\end{lemma}

In these notations, we have a functor
\begin{equation}
    (-)_{\Delta_a}\colon \Sh^{\bR^2}(M\times \bR_t)\rightarrow \Sh^{\bR}(M\times \bR_t).
\end{equation}

We set 
\begin{equation}
    (m_!^{\bR^2})_{\Delta_a}:=(-)_{\Delta_a}\circ m^{\bR^2}_!\colon \Sh^{\bR^2}(M\times \bR_t^2)\rightarrow\Sh^{\bR}(M\times \bR_t).
\end{equation}
We have the right adjoint of $(m_!^{\bR^2})_{\Delta_a}$:
\begin{equation}
(m_!^{\bR^2})_{\Delta_a}^R:=m^!_{\bR^2}\circ \iota^{a}\colon\Sh^{\bR}(M\times \bR_t)\rightarrow\Sh^{\bR^2}(M\times \bR_t^2).
\end{equation}

Let $p_i\colon M\times \bR_t^2\rightarrow M\times \bR_t$ be the $i$-th projection. We also have the corresponding projection $q_i\colon \bR^2\rightarrow \bR$. We then have
\begin{equation}
   (p_{i*}^{\bR^2})^{\ker q_i}:=(-)^{\ker q_i}\circ p^{\bR^2}_{i*}\colon  \Sh^{\bR^2}(M\times \bR^2_t)\rightarrow \Sh^{\bR}(M\times \bR_t)
\end{equation}
and 
\begin{equation}
   {}^L(p_{i*}^{\bR^2})^{\ker q_i}:=p_{i, \bR^2}^{-1}\circ \iota^{q_i} \colon  \Sh^{\bR}(M\times \bR_t)\rightarrow \Sh^{\bR^2}(M\times \bR^2_t).
\end{equation}

\begin{lemma}
${}^L(p_{i*}^{\bR^2})^{\ker q_i}$ is the left adjoint of $p_{i*}^{\ker q_i}$.
\end{lemma}

\begin{definition}[Convolution product]
We set, for objects $\cE, \cF\in \Sh^{\bR}(M\times \bR_t)$, 
\begin{equation}
    \cE \star_\bR \cF:=(m_!^{\bR^2})_{\Delta_a}((\id_M\times p_1)^{-1}_{\bR}\cE\otimes^{\bR\times \bR} (\id_M\times p_2)_\bR^{-1}\cF).
\end{equation}
\end{definition}

\begin{definition}[Convolution product]\label{def:convolution1}
    We set, for objects $\cE\in \Sh^\bR(M\times \bR_t)$ and $\cF\in \Sh(M\times \bR_t)$, 
    \begin{equation}
    \cE \star_{R} \cF:=\cE\star_\bR \frakf^L(\cF)\in \Sh^{\bR}(M\times \bR_t).
\end{equation}
\end{definition}

\begin{lemma}\label{lem:associativity}
    \begin{enumerate}
        \item For $\cE, \cF, \cG\in \Sh^{\bR}(M\times \bR_t)$, we have $(\cE\star_\bR\cF)\star_\bR\cG\cong \cE\star_\bR (\cF\star_\bR \cG)$.
        \item For $\cE, \cF\in \Sh^{\bR}(M\times \bR_t)$ and $\cG\in \Sh(M\times \bR_t)$, we have $(\cE\star_\bR\cF)\star_R\cG\cong \cE\star_\bR (\cF\star_R \cG)$.
        \item For $\cE\in \Sh^{\bR}(M\times \bR_t)$ and $\cF, \cG\in \Sh(M\times \bR_t)$, we have $(\cE\star_R\cF)\star_R\cG\cong \cE\star_R (\cF\star \cG)$.
    \end{enumerate}
\end{lemma}
\begin{proof}
    The proofs are straightforward. We omit the proof.
\end{proof}

For $\star_R$, the underlying sheaf is obtained as the nonequivariant convolution product:
\begin{lemma}\label{lemma:forgetconvolution}
    For objects $\cE\in \Sh^\bR(M\times \bR_t)$ and $\cF\in \Sh(M\times \bR_t)$, we have
    \begin{equation}
        \frakf(\cE\star_R\cF)\cong \frakf(\cE)\star \cF.
    \end{equation}
\end{lemma}
\begin{proof}
We have
\begin{equation}
    \frakf(\cE)\star \frakf^L(\cF)\cong \bigoplus_{c\in \bR}T_c(\frakf(\cE)\star \cF)\cong \bigoplus_{c\in \bR}(T_c\frakf(\cE))\star \cF\cong \bigoplus_{c\in \bR}\frakf(\cE)\star \cF
\end{equation}
where the last isomorphism is coming from the equivariant structure morphism of $\frak\cE$. Under this identification, the pull back along $a\in \Delta_a$ is $\bigoplus_{c-a\in \bR}\frakf(\cE)\star \cF$. By taking the coinvariant, we obtain $\frakf(\cE)\star \cF$. This completes the proof.
\end{proof}
Note that the equivariant structure of $\cE\star \cF$ is given by $\frakf(\cE)\star \cF\xrightarrow{\alpha_c\star\id}(T_c\frakf(\cE))\star \cF$ by the proof of the above lemma.

\begin{lemma}\label{lem:unit}
    We have
    \begin{equation}
        (-)\star_R\bK_{M\times \{t=0\}}\cong \id.
    \end{equation}
\end{lemma}
\begin{proof}
By Lemma~\ref{lemma:forgetconvolution} and Lemma~\ref{lem:nonequivunit}, we have 
\begin{equation}
    \frakf(\cE\star_R\bK_{M\times \{t=0\}})\cong \frakf(\cE)\star \bK_{M\times \{t=0\}}\cong \frakf(\cE).
\end{equation}
By the above remark, the equivariant structures also coincide.
\end{proof}

\subsection{Positively microsupported equivariant sheaves}
In this section, we would like to consider the equivariant version of positively microsupported sheaves.

We define the full subcategory $\Sh_{\tau\leq 0}^\bR(M\times \bR_t)$ of $\Sh^\bR(M\times \bR_t)$ as the full subcategory spanned by the objects satisfying $\SS(\cE)\subset \{\tau\leq 0\}$.

\begin{lemma}\label{lem:inducedf}
    The functor $\frakf$ can be restricted to $\frakf\colon \Sh^{\bR}_{\tau\leq 0}(M\times \bR_t)\rightarrow \Sh_{\tau\leq 0}(M\times \bR_t)$.
\end{lemma}
\begin{proof}
This follows from the definition of $\SS$ for equivariant sheaves~(\ref{eq:definitionofequivariantSS}).    
\end{proof}

Recall the notation $\bK_{t\geq 0}:=\bK_{M\times \{t\geq 0\}}$. 
\begin{lemma}\label{lemma:vanishing}
For $\cE\in \Sh^\bR_{\tau\leq 0}(M\times \bR_t)$, we have $\cE\star_R \bK_{t\geq 0}\simeq 0$.
\end{lemma}
\begin{proof}
 For $\cE\in \Sh^\bR_{\tau\leq 0}(M\times \bR_t)$, we have
    \begin{equation}
        \frakf(\cE\star_R \bK_{t\geq 0})\cong \frakf(\cE)\star \bK_{t\geq 0}
    \end{equation}
    by Lemma~\ref{lemma:forgetconvolution}. Combining Lemma~\ref{lem:inducedf} and Lemma~\ref{lemma:nonequivvanishing}, we can conclude that the right hand side is zero. By Lemma~\ref{lemma:conservative}, this implies that $\cE\star_R\bK_{t\geq 0}$ is also zero. This completes the proof.
\end{proof}

By Lemma~\ref{lem:unit}, $(-)\star_R\bK_{M\times \{t=0\}}\simeq \id$, we have a canonical morphism
\begin{equation}
    (-)\star_R\bK_{t\geq 0}\rightarrow \id
\end{equation}
induced by the canonical morphism $\bK_{t\geq 0}\rightarrow \bK_{M\times \{t= 0\}}$ corresponding to $\id$ under the adjunction isomorphism $\Hom(\bK_{t\geq 0}, \bK_{M\times\{t= 0\}})\cong \Hom(\bK_{M\times \{t= 0\}}, \bK_{M\times \{t=0\}})$.

In particular, we have a morphism
\begin{equation}
    ((-)\star_R\bK_{t\geq 0})\star_R\bK_{t\geq 0}\rightarrow(-)\star_R \bK_{t\geq 0}.
\end{equation}
\begin{lemma}\label{lem:idem}
The above morphism is an isomorphism
\end{lemma}
\begin{proof}
By Lemma~\ref{lem:associativity}, we have
\begin{equation}
    ((-)\star_R\bK_{t\geq 0})\star_R\bK_{t\geq 0}\cong (-)\star_R(\bK_{t\geq 0}\star\bK_{t\geq 0})\cong (-)\star\bK_{t\geq 0}. 
\end{equation}
The latter isomorphism is well-known.
\end{proof}

We denote the essential image of $(-)\star_R\bK_{t\geq 0}$ by $\Im(\bK_{t\geq 0})$.
\begin{lemma}\label{lem:imageistriangulated}
The full subcategory $\Im(\bK_{t\geq 0})$ is closed under taking cones.
\end{lemma}
\begin{proof}
    For any morphism $f\colon \cE\rightarrow \cF$ in $\Im(\bK_{t\geq 0})$, the natural transformation $(-)\star_R \bK_{t\geq 0}\rightarrow \id$ gives a commutative square
    \begin{equation}
        \xymatrix{
\cE \ar[d] \ar[r]^f &\cF \ar[d]  \\
\cE\star_R \bK_{t\geq 0}\ar[r]^{f\star_R \bK_{t\geq 0}} & \cF\star_R \bK_{t\geq 0}.
}
    \end{equation}
Since the vertical arrows are isomorphisms by Lemma~\ref{lem:idem}, we have
\begin{equation}
    \Cone(f)\cong \Cone(f\star_R\bK_{t\geq 0})\cong \Cone(f)\star_R\bK_{t\geq 0}.
\end{equation}
This completes the proof.
\end{proof}

\begin{lemma}\label{lem:decomposition}
    For any object $\cE\in \Sh^\bR(M\times \bR_t)$, we have a distinguished triangle
    \begin{equation}
        \cE_1\rightarrow \cE\rightarrow \cE_2\xrightarrow{[1]}
    \end{equation}
    such that $\cE_1\in \Im(\bK_{t\geq 0})$ and $\cE_2\in \Sh_{\tau\leq 0}^\bR(M\times \bR_t)$.
\end{lemma}
\begin{proof}
We set $\cE_1:=\cE\star_R\bK_{t\geq 0}$. By Lemma~\ref{lemma:forgetconvolution},
\begin{equation}
    \frakf(\cE_2)\cong \Cone(\frakf(\cE_1)\rightarrow \frakf(\cE))\cong \Cone(\frakf(\cE)\star \bK_{t\geq 0}\rightarrow \frakf(\cE)).
\end{equation}
The microsupport of the RHS is contained in $\{\tau\leq 0\}$ by Lemma~\ref{lem:nonequivdecomposition}. Then we have $\cE_2\in \Sh_{\tau\leq 0}^\bR(M\times \bR_t)$. This completes the proof.
\end{proof}

\begin{lemma}\label{lem:SOD}
    We have a semi-orthogonal decomposition
    \begin{equation}
        \Sh^\bR(M\times \bR_t)\cong \la \Sh^\bR_{\tau\leq 0}(M\times \bR_t), {}^\perp\Sh^\bR_{\tau\leq 0}(M\times \bR_t)\ra.
    \end{equation}
    Moreover, $\Im(\star_R\bK_{t\geq 0})$ is contained in ${}^\perp\Sh^\bR_{\tau\leq 0}(M\times \bR_t)$, and the inclusion is an equivalence.
\end{lemma}
\begin{proof}
    By Lemma~\ref{lem:idem} and Lemma~\ref{lem:decomposition}, $\Im(\bK_{t\geq 0})$ is an admissible subcategory and get a semi-orthogonal decomposition 
        \begin{equation}
        \Sh^\bR(M\times \bR_t)\cong \la \Sh^\bR_{\tau\leq 0}(M\times \bR_t), \Im(\star_R\bK_{t\geq 0})\ra.
    \end{equation}
    This completes the proof.
\end{proof}

We set
\begin{equation}
    \begin{split}
       \Sh^\bR_{\tau>0}(M\times \bR_t)&:=\Sh^\bR(M\times \bR_t)/\Sh^\bR_{\tau\leq 0}(M\times \bR_t).
    \end{split}
\end{equation}
To simplify the notations, we set
\begin{equation}
    \mu(T^*M):=\Sh^\bR_{\tau>0}(M\times \bR_t).
\end{equation}

We denote the quotient functor by
\begin{equation}
    \begin{split}
        q_\bR&\colon \Sh^\bR(M\times \bR_t)\rightarrow \mu(T^*M).
    \end{split}
\end{equation}
\begin{lemma}\label{lem:quotientequiv}
The quotient functor restricts to an equivalence $q_\bR|_{{}^\perp\Sh^\bR_{\tau\leq 0}(M\times \bR_t)}\colon {}^\perp\Sh^\bR_{\tau\leq 0}(M\times \bR_t)\rightarrow \mu(T^*M)$.
\end{lemma}
\begin{proof}
    This is straightforward from Lemma~\ref{lem:SOD}.
\end{proof}

Lemma~\ref{lemma:vanishing} implies that the functor $(-)\star_R \bK_{t\geq 0}$ descends to a functor from $\mu(T^*M)$ to $\Sh^\bR(M\times \bR_t)$. We denote the induced functor by $\iota^\bR_{>0}$.

\begin{lemma}\label{lemma:Cutoff}
    \begin{enumerate}
    \item The functor $\iota^\bR_{>0}$ is fully faithful.
        \item The image of the functor $\iota^\bR_{>0}$ is $\Im(\star_R\bK_{t\geq 0})$. Composing it with the inclusion equivalence $\Im(\star_R\bK_{t\geq 0})\hookrightarrow {}^\perp\Sh^\bR_{\tau\leq 0}(M\times \bR_t)$, it gives an inverse of the equivalence of Lemma~\ref{lem:quotientequiv}. 
        \item Composition of $\iota^\bR_{>0}\colon \mu(T^*M)\rightarrow\Sh^\bR(M\times \bR_t)$ and the quotient functor $q_\bR\colon \Sh^\bR(M\times \bR_t)\rightarrow \mu(T^*M)$ is the identity.
    \end{enumerate}
\end{lemma}
\begin{proof}
(1). The composition $\iota^\bR_{>0}\circ q_\bR|_{{}^\perp\Sh^\bR_{\tau\leq 0}(M\times \bR_t)}$ is $(-)\star_R \bK_{t\geq 0}$. This is an equivalence onto $\Im(\star_R\bK_{t\geq 0})$ by Lemma~\ref{lem:idem} and Lemma~\ref{lem:SOD}. Hence $\iota^\bR_{>0}$ is an equivalence. 

(2). Composing further with the inclusion equivalence $\Im(\star_R\bK_{t\geq 0})\hookrightarrow {}^\perp\Sh^\bR_{\tau\leq 0}(M\times \bR_t)$, the functor becomes $\star_R\bK_{t\geq 0}$. This is $\id$ by Lemma~\ref{lem:idem} and Lemma~\ref{lem:SOD}. Hence $\iota^\bR_{>0}$ is the inverse of $q_\bR|_{{}^\perp\Sh^\bR_{\tau\leq 0}(M\times \bR_t)}$. 

(3). By (2), we have
\begin{equation}
    q_\bR\circ \iota_{>0}\cong q_\bR|_{{}^\perp\Sh^\bR_{\tau\leq 0}(M\times \bR_t)}\circ \iota_{>0}\cong \id.
\end{equation}

\end{proof}

\subsection{Operations for positively microsupported equivariant sheaves}
We also would like to introduce the convolution product for positively microsupported sheaves.

We denote the composite functor
\begin{equation}
    q_\bR\circ (\iota^\bR_{>0}(-)\star_\bR\iota^\bR_{>0}(-)) \colon     (\mu(T^*M))^{\times 2} \rightarrow \mu(T^*M)
\end{equation}
by $\star^{>0}_\bR$. If the context is clear, we simply write it by $\star_\bR$.

We also denote the composition of functors
\begin{equation}
    q_\bR\circ (\iota^\bR_{>0}(-)\star_R\iota_{>0}(-))\colon      \mu(T^*M)\otimes \Sh(M\times \bR_t) \rightarrow \mu(T^*M).
\end{equation}
by $\star^{>0}_R$. If the context is clear, we simply write it by $\star_R$.

\subsection{Novikov ring action}
We consider $\bK_{t\geq 0}\in \Sh(M\times \bR_t)$. Then we obtain an object $1_{\mu M}:=q_\bR\circ \frakf^L(\bK_{t \geq 0})$.
\begin{lemma}\label{lem:Novcomputation}
    \begin{equation}
        H^0\End_{\mu(T^*M)}(1_{\mu M})\cong \Lambda_0.
    \end{equation}
\end{lemma}
\begin{proof}
Note that $\frakf^L(\bK_{t\geq 0})\in {}^\perp\Sh_{\tau\leq 0}^\bR(M\times \bR_t)$. We have
    \begin{equation}
        \begin{split}
            \Hom_{\mu(T^*M)}(1_{\mu M}, 1_{\mu M})&\cong \Hom_{{}^\perp\Sh_{\tau\leq 0}^\bR(M\times \bR_t)}(\frakf^L(\bK_{t\geq 0}), \frakf^L(\bK_{t\geq 0}))\\
            &\cong  \Hom_{\Sh^\bR(M\times \bR_t)}(\frakf^L(\bK_{t\geq 0}), \frakf^L(\bK_{t\geq 0}))\\
            &\cong \Hom_{\Sh(M\times \bR_t)}(\bK_{t\geq 0}, \frakf\frakf^L(\bK_{t\geq 0}))\\
            &\cong \Hom_{\Sh(M\times \bR_t)}(\bK_{t\geq 0}, \bigoplus_{c\in \bR}\bK_{t\geq c}).
        \end{split}
    \end{equation}

We first note that
\begin{equation}
    \Hom_{\Sh(M\times \bR_t)}(\bK_{t\geq 0}, \bK_{t\geq c})=\begin{cases}
    &\bK \text{ for $c\geq 0$,}\\
    &0 \text{ otherwise}.
\end{cases}
\end{equation}
Hence $\Hom_{\Sh(M\times \bR_t)}(\bK_{t\geq 0}, \bigoplus_{c\in \bR}\bK_{t\geq c})\cong \Hom_{\Sh(M\times \bR_t)}(\bK_{t\geq 0}, \bigoplus_{c\geq 0}\bK_{t\geq c})$. So our computation is reduced to compute the space of sheaf homomorphisms from $\bK_{t\geq 0}$ to $\bigoplus_{c\geq 0}\bK_{t\geq c}$, which is the same as the global section space of $\bigoplus_{c\geq 0}\bK_{t\geq c}$. 

For the purpose, we view $\bigoplus_{c\geq 0}\bK_{t\geq c}$ as the sheafification of the presheaf
\begin{equation}
    U\mapsto \bigoplus_{c\geq 0}(\bK_{t\geq c}(U)).
\end{equation}
The presheaf already satisfies the locality condition. Then any global section of the sheafification is given by an open covering $\bR_t=\bigcup_{i\in I}U_i$ with compatible sections $\{s_i\in\bigoplus_{c\in \bR}\bK_{t\geq c}(U_i)\}_{i\in I}$. If $|I|$ is finite, such a section arises from a section of the presheaf. So we consider the case when $|I|$ is not finite. Since $\bR$ is paracompact, we can assume $I= \bN$. Then, for each $i\in \bN$, the set
\begin{equation}
    \lc c\in \bR\relmid s_j^c\neq 0, j\leq i \rc
\end{equation}
is a finite set where $s_j^c$ is the component of $s_j$ in $\bK_{t\geq c}(U_j)$. It implies that there exists a section $s_i'\in \bigoplus_{c\geq 0}(\bK_{t\geq c}(\bigcup_{j\leq i}U_j))$ of the presheaf such that $s_i'|_{U_j}=s_j$ for any $j\leq i$. 

This observation implies that we have
\begin{equation}
    \Gamma(\bR_t, \bigoplus_{c\geq 0}\bK_{t\geq c})\cong \lim_{+\infty\leftarrow a} \bigoplus_{c\geq 0}(\bK_{t\geq c}((-a,a))).
\end{equation}
We also have an identification of vector spaces $\bigoplus_{c\geq 0}(\bK_{t\geq c}((-a,a)))\cong \bK[\bR_{\geq 0}]/T^a\bK[\bR_{\geq 0}]$. Hence we have $\Gamma(\bR_t, \bigoplus_{c\geq 0}\bK_{t\geq c})\cong \Lambda_0$. It is easy to check that this also gives a ring isomorphism. This completes the proof.
\end{proof}

We would like to construct the Novikov ring action on the homotopy category of $\mu(T^*M)$. 
\begin{lemma}\label{lem:id}
     The functor $(-)\star_\bR 1_{\mu M}$ is isomorphic to the identity on $\mu(T^*M)$.
\end{lemma}
\begin{proof}
It is enough to show that $(-)\star_\bR \frakf(\bK_{t\geq 0})$ is the identity on ${}^\perp\Sh_{\tau\leq 0}^\bR(M\times \bR_t)$. This is already proved in Lemma~\ref{lem:idem} and Lemma~\ref{lem:SOD}.
\end{proof}
By the functoriality of $\star_\bR$, Lemma~\ref{lem:Novcomputation}, and Lemma~\ref{lem:id}, we get a sequence of morphisms
\begin{equation}
    \Lambda_0\rightarrow \End_{\mu(T^*M)}(1_{\mu M})\rightarrow \End_{\mu(T^*M)}(\cE\star_\bR 1_{\mu M})\simeq \End_{\mu(T^*M)}(\cE)
\end{equation}
This gives a $\Lambda_0$-linear structure of the homotopy category of $\mu(T^*M)$. 

\begin{remark} Since $(-)\star 1_{\mu M}$ is isomorphic to the identity on $\mu(T^*M)$, the image of $(-)\star 1_{\mu M}$ is quasi-equivalent to $\mu(T^*M)$. The former dg-category is enriched over $\Lambda_0$ by the above observation, hence the latter category is also enriched over $\Lambda_0$ in a homotopical sense.
\end{remark}

\subsection{Non-conic microsupport}

Take an exact symplectic structure $\omega$ on $T^*M$ with its primitive $\lambda$. Then $(T^*M\times \bR_t, dt+\lambda)$ is a contact manifold.
We denote the standard symplectic structure of $T^*M$ by $\omega_{st}$ with its standard primitive $\lambda_{st}$. Then $\omega_{st}+d\tau\wedge dt$ is a symplectic structure of $T^*M\times T^*_{\tau>0}\bR_t$.
Suppose there exists an $\bR_{>0}$-action on $T^*M\times T^*_{>0}\bR_T$ which makes it a homogeneous symplectic manifold and its quotient is the contact manifold $(T^*M\times \bR_t, dt+\lambda)$. Then we have projections $T^*M\times T^*_{>0}\bR_t\rightarrow T^*M\times \bR_t\rightarrow T^*M$ where the second projection is the stupid projection. We denote the composition by $\rho$. By the uniqueness lemma of Viterbo~\cite{Vit}, such $\rho$ is unique for a given $\lambda$ if it exists.

The following is the fundamental example. 
\begin{example}\label{rho}
For the standard Liouville structure, $\rho_{st}:=\rho$ can be explicitly written as $\rho_{st}\colon T^*M\times T^*_{\tau>0}\bR_t\rightarrow T^*M$ by $(x, \xi, t, \tau)\mapsto (x, \xi/\tau)$ where $x\in M, \xi\in T^*_{x}M$.
\end{example}

In the following, we write $\rho_{st}$ by $\rho$ unless specified.
\begin{definition}
For an object $\cE\in \Sh^\bR_{\tau>0}(M\times \bR_t)$, we set
\begin{equation}
    \musupp(\cE):=\rho(\SS(\cE)\cap\{\tau>0\}).
\end{equation}
\end{definition}

\section{Sheaf quantization}
In this section, we discuss the microlocalization of sheaves in the equivariant context. The nonequivariant version is discussed in \cite{KS, Guillermou, JT}.

\subsection{Maslov covering and graded Lagrangian}
Let $V$ be a symplectic vector space. The fundamental group of the Lagrangian Grassmannian $LGr(V)$ is isomorphic to $\bZ$. We denote the universal covering by $\pi_V\colon \widetilde{LGr}(V)\rightarrow LGr(V)$, which is a $\bZ$-covering.

Let $X$ be a symplectic manifold. We denote the Lagrangian Grassmannian bundle by $LGr(X)$. 
\begin{definition}[{\cite[2b]{SeidelGraded}}]
    A Maslov covering $\widetilde{LGr}(X)$ is a fiber bundle over $X$ with a morphism $\widetilde{LGr}(X)\rightarrow {LGr}(X)$ such that it is the universal covering fiberwisely.
\end{definition}
Let $L$ be a Lagrangian submanifold of $X$. By taking its tangent fibers, we get a section of the projection $\lambda_L\colon LGr(X)|_L\rightarrow L$. We call the section the Lagrangian Gauss map.
\begin{definition}
A graded Lagrangian submanifold is a Lagrangian submanifold $L$ with a choice of a map $L\rightarrow \widetilde{LGr}(X)|_L$ which is a lift of the Lagrangian Gauss map.
\end{definition}

When $X$ is the cotangent bundle $T^*M$, there exists a canonical choice of a Maslov covering~\cite[2b]{SeidelGraded} as follows:
\begin{definition}
    A Lagrangian distribution over $X$ is a choice of subbundle of $TX$ such that it is a Lagrangian subspace in each fiber.
\end{definition}
Let $X$ be a symplectic manifold with a Lagrangian distribution. Then we get a section of $LGr(X)\rightarrow X$. Then we can construct the fiberwise universal covering of $LGr(X)$ by taking the image of $LGr(X)$ as fiberwise base point.

We consider the case when $X$ is a cotangent bundle $T^*M$. We denote the projection to the base $M$ by $\pi$. For any point $p\in T^*M$, the assignment $p\mapsto T_{f, p}T^*M:=T_{p}T^*_{\pi^{-1}\pi(p)}M\subset T_{p}T^*M$ forms a Lagrangian distribution. Then we can form a fiberwise universal covering as above. In the following, for cotangent bundles, we always use this Maslov covering.

\begin{example}[cotangent fiber]\label{ex:cotfibergrading}
    For a point $x\in M$, the cotangent fiber $L=T^*_xM$ is Lagrangian. The Gauss map $L\rightarrow LGr(T^*M)$ is in the Lagrangian distribution. Hence we can lift it to $L\rightarrow \widetilde{LGr}(T^*M)$ as a fiberwise trivial loop.
\end{example}
\begin{example}[Graph]\label{ex:graphgrading}
The zero section $T^*_MM$ is Lagrangian. We have a splitting $TT^*M|_{T^*_MM}=T_fT^*M\oplus TM$. Each factor is a Lagrangian subspace. Then the rotation action
\begin{equation}
    r_\theta:=\begin{pmatrix}
        \cos\theta &-\sin\theta\\
        \sin\theta &\cos\theta
    \end{pmatrix}\colon T_fT^*M\oplus TM\rightarrow T_fT^*M\oplus TM
\end{equation}
acts as a symplectic bundle isomorphism over $M$. In particular, $r_\theta(T_fT^*M)$ ($\theta\in [0, \pi/2]$) gives a fiberwise path in $LGr(T^*M)$ from the Lagrangian distribution to the Gauss map image of $T^*_MM$. Hence this gives a grading of $T^*_MM$. More generally, for a closed 1-form $\phi$, the graph $G(\phi)$ of $\phi$ is a Lagrangian submanifold of $T^*M$. For $c\in [0, 1]$, the graph of $c\cdot \phi$ gives a Lagrangian isotopy between $G(\phi)$ and $T^*_MM$. Hence the above grading of $T^*_MM$ induces a grading of $G(\phi)$.
\end{example}

\subsection{Relative Pin structure}
We also introduce the notion of relative Pin structure. Let $L$ be an $n$-dimensional manifold. Then the classification map of the tangent bundle is 
\begin{equation}
    L\rightarrow BO(n).
\end{equation}
The second Stiefel--Whitney class is given by the composition of the above morphism with the universal second SW class $BO(n)\rightarrow B^2\bZ/2$. We also have the following homotopy exact sequence:
\begin{equation}
    BPin^+(n)\rightarrow BO(n)\rightarrow B^2\bZ/2.
\end{equation}
Here $Pin^+(n)$ is a double cover of $O(n)$ with the center $\bZ/2\times\bZ/2$.

\begin{definition}
    A Pin structure on $L$ is a lift $L\rightarrow BPin^+(n)$ of the map $L\rightarrow BO(n)$.
\end{definition}
In other words, it is a null homotopy of the second SW map. From the above homotopy exact sequence, we need the vanishing of the second SW class to obtain a Pin structure.

We can consider a relative version. Fix $w\in H^2(L, \bZ/2)$. Then we can twist $L\rightarrow B^2\bZ/2$ by adding $-w$.
\begin{definition}
    A null-homotopy of the second SW map twisted by $w$ is called a relative Pin structure with the background class $w$.
\end{definition}

In the case of $X=T^*M$, we consider the background class $w_2(T^*M)|_L$.

\begin{example}[Graph]\label{ex:graphPin}
Suppose $X$ is the cotangent bundle $T^*M$. We consider the zero section $L=T^*_MM$. Then $w_2(X)|_L=w_2(L)$. Hence the twist gives the trivial map $L\rightarrow B^2\bZ/2$. Hence there exists a trivial Pin structure. Similarly, we can equip the graph of closed 1-form with a relative Pin structure.
\end{example}

\begin{example}[Cotangent fiber]\label{ex:cotfiberPin}
 We consider the cotangent fiber $L=T^*_xM$. Then $w_2(X)|_L=0=w_2(L)$. Hence the twist gives the trivial map. Hence there exists a trivial Pin structure. 
\end{example}

\subsection{Microsheaves along Lagrangian}
In this subsection, we recall the notion of microsheaves. The content in this subsection is essentially contained in \cite{Jin}.

For an open subset $U\subset T^*N$. we set
\begin{equation}
    \Sh(U; \bK_N):=\Sh(\bK_N)/\lc\cE\relmid \SS(\cE)\subset T^*N\bs U \rc.
\end{equation}
The assignment forms a prestack. The stackification is denoted by $\mu\Sh(-)$, called the Kashiwara--Schapira stack.

For a subset $A\subset U$, we can consider the subsheaf spanned by the objects supported in $A$. We denote it by $\mu\Sh_A$. This can be viewed as a sheaf supported on $A$. 

Let $L$ be a Lagrangian submanifold in the cotangent bundle $T^*M$. Restricting the Liouville form $\lambda$ to a contractible open subset $U$ of $L$, there exists a primitive of $\lambda|_U$ by the Poincare lemma. We fix such a primitive $f_U\colon U\rightarrow \bR$. Then we set
\begin{equation}
    L_{f_U}:=\lc (x, \xi, t, \tau)\in T^*M\times \{\tau>0\}\relmid (x, \xi/\tau)\in U, t=-f_U(x, \xi/\tau)\rc.
\end{equation}
Under the twisted projection $\rho:=\rho_{st}$, we have $\rho(L_{f_U})=U$. We can consider the sheaf $\mu\Sh_{L_{f_U}}$ on $L_{f_U}$. Since $\mu\Sh_{L_{f_U}}$ is conical, it descends to a sheaf on $U$. Also, the resulting sheaf on $U$ does not depend on the choice of $f$. Hence we denote the resulting sheaf by $\mu\Sh_{U}$.

Take an contractible open covering $\{U_i\}_{i\in I}$ of $L$. For each $U_i$, put $\mu\Sh_{U_i}$. Again, the sheaf is defined locally on $U_i$, they can be glued up on the intersections $U_i\cap U_j$. Hence we get a sheaf of categories $\mu\Sh_L$ over $L$. The sheaf is a locally constant sheaf of categories, hence classified by a map
\begin{equation}
    \mathrm{KS}\colon L\rightarrow B\Pic(\Mod(\bK)).
\end{equation}
where $B\Pic(\Mod(\bK))$ is the delooping of the Picard groupoid of $\Mod(\bK)$. Note that giving a simple global section of $\mu\Sh_L$ is equivalent to giving a null homotopy of $\mathrm{KS}$, and further equivalent to giving an equivalence $\mu\Sh_L\cong \mathrm{Loc}_L$ where $\mathrm{Loc}_L$ is the sheaf of the local systems over $L$.
\begin{definition}
    A $\bK$-brane structure of $L$ is a null homotopy of $\mathrm{KS}$.
\end{definition}

On the other hand, we have the Lagrangian Gauss map $L\rightarrow U/O$ to the stable Lagrangian Grassmannian and the delooped $J$-homomorphism $B(U/O)\rightarrow \Mod(\bS)$ where the RHS is the module category of the sphere spectrum.

\begin{theorem}
    If $\bK=\bZ$, the map $KS$ is given by $L\rightarrow B\bZ\times B^2\bZ/2\bZ$ where the first factor is the Maslov class and the second factor is the relative Stiefel--Whitney class.
\end{theorem}
\begin{proof}
    This is the result of \cite{guillermou2015quantization} for the case $\bK$ is a ring (see \cite{Jin} for more general coefficients). Although the statement in \cite{guillermou2015quantization} is only for exact case, since the statement is essentially local, it works for nonexact case.
\end{proof}

Since $\bZ$ is initial among the rings, we obtain the following.
\begin{corollary}
    For any commutative ring $\bK$, the grading and a relative Pin structure of $L$ gives a $\bK$-brane structure.
\end{corollary}

We define a full subcategory as
\begin{equation}
    \mu_L(T^*M):=\lc\cE\in \mu(T^*M)\relmid \musupp(\cE)\subset L \rc.
\end{equation}
In the following sections, we will construct the microlocalization functor
\begin{equation}
    \mu_L\colon \mu_L(T^*M)\rightarrow \mu\Sh_L(L).
\end{equation}

\subsection{Microlocalization}
We consider the Kashiwara--Schapira stack $\mu \mathrm{Sh}(-)$. For a Lagrangian submanifold $L$, we can consider the substack $\mu \mathrm{Sh}_{\rho^{-1}(L)}(-)$ consisting of objects supported on $\rho^{-1}(L)$. 

Take an open contractible covering $\{U_i\}$ of $L$ i.e., $U_i$ is open in $T^*M$ and $U_i\cap L$ is contractible and any intersections of $U_i$ also satisfies the same assumption.
For each open subset $U_i$, we have a presentation $U_i\cap \rho^{-1}(L)=\bigcup_{c}L_{f+c}\cap U_i$ where $f$ is some primitive of $\lambda|_L$.
\begin{lemma}
  For an object $\cE\in \mu\mathrm{Sh}_{\rho^{-1}(L)}(U)$, the restriction $\cE|_{L_{f+c}}$ is in $ \mu\mathrm{Sh}_{L_{f}}|_{L_f}(U\cap L_f)$. Here $U$ is some intersection of $U_i$'s.
\end{lemma}
\begin{proof}
    Then there exists a local symplectomorphism $\phi$ such that $L$ is mapped to the zero section. By taking the contactization, we get a local contactomorphism (or equivalently, a local homogeneous symplectomophism) $\widetilde{\phi}$ mapping $L_{f+c}$ to $\bR^n_x\times \bR_{\tau>0}\subset T^*\bR^n_x\times\{c\}\times T^*\bR_t$. By the quantized contact transformation~\cite{KS}, this gives an equivalence $\mu\Sh_{\rho^{-1}(L)}(U)\cong \mu\Sh_{\widetilde{\phi}(\rho^{-1}(L))}(\widetilde{\phi}(U))$. Then the latter category is given by the sheafification of quotient categories of
    \begin{equation}
        \lc \cE\in \Sh_{\tau>0}(M\times \bR_t)\relmid \SS(\cE)\subset \bR^n_x\times \bR_t\times \bR_{\tau>0}\rc.
    \end{equation}
    Since any object of this category is represented by an object of the form $p_M^{-1}\cE_0$ where $p_M\colon \bR^n_x\times \bR_t\rightarrow \bR_t$, the microstalks along $\bR^n_x\times \{t=0\}\times \bR_{\tau>0}$ is constant. This completes the proof.
\end{proof}

In the next section, we will prove this lemma by using quantized contact transformation.

Let $C(U)$ be the Cech covering associated to $\{U_i\}$.
We have a sequence of functors
\begin{equation}
    \mu_L(T^*M)\xrightarrow{\frakf} \Sh_{\rho^{-1}(L)}(M\times \bR_t)\rightarrow \prod_{U\in C(\cU)}\mu\mathrm{Sh}_{\rho^{-1}(L)}(U)\rightarrow \prod_{U\in C(\cU)}\mu\mathrm{Sh}_{L_{f}}|_{L_f}(U\cap L_f)\rightarrow \mu\mathrm{Sh}(L)
\end{equation}
where the leftmost morphism is the colimit with respect to the Cech covering, and $\Sh_{\rho^{-1}(L)}(M\times \bR_t)$ is the full subcategory of $\Sh(M\times \bR)$ consisting of the objects whose microsupports are in the zero section and $\rho^{-1}(L)$. The composition is our desired microlocalization functor, and will be denoted by $\mu_L$.

\subsection{Brane structure and microlocalization}
Now we can define the notion of Lagrangian brane.
\begin{definition}
    A Lagrangian brane $(L, \alpha, b, \cL)$ is a tuple of the following data:
    \begin{enumerate}
        \item a graded Lagrangian submanoifold $L$ with $\alpha\colon L\rightarrow \widetilde{LGr}(X)|_L$
        \item a relative Pin structure $b$ of $L$
        \item a derived local system $\cL$ over $L$.
    \end{enumerate}
    When $\cL$ is the rank 1 constant local system, we simply say that $(L,\alpha, b)$ is a Lagrangian brane.
\end{definition}
\begin{remark}
    There are further generalizations of the notion of Lagrangian branes. See e.g. \cite{JT}.
\end{remark}

Given a Lagrangian brane $(L, \alpha, b)$, we have an equivalence $\mu\Sh_L(L)\cong \mathrm{Loc}(L)$. Composing it with $\mu_L$, we obtain 
\begin{equation}
    \mu_{(L, \alpha,b)}\colon \mu_L(T^*M)\rightarrow \mathrm{Loc}(L).
\end{equation}

\begin{definition}
    Let $(L, \alpha, b, \cL)$ be a Lagrangian brane. A sheaf quantization of $(L, \alpha, b, \cL)$ is an object of $\mu_L(T^*M)$ such that $\mu_{(L, \alpha, b)}(\cE)\cong \cL$.
\end{definition}
More simply,
\begin{definition}
\begin{enumerate}
    \item Let $L$ be a Lagrangian submanifold. A sheaf quantization of $L$ is an object of $\mu_L(T^*M)$. 
    \item Let $L$ be a Lagrangian submanifold. A pure sheaf quantization of $L$ is an object of $\mu_L(T^*M)$ such that $\mu_{(L, \alpha, b)}(\cE)$ is concentrated in a single degree for some brane structure $(L, \alpha, b)$.
    \item Let $L$ be a Lagrangian submanifold. A simple sheaf quantization of $L$ is a pure sheaf quantization whose microlocalization is rank 1.
\end{enumerate}
\end{definition}

We also sometimes use the following terminology.
\begin{definition}
    Let $(L, \alpha, b)$ be a Lagrangian brane. The microstalk of an object $\cE$ of $\mu_L(T^*M)$ at $(x, \xi)\in L$ is the stalk of $\mu_{(L,\alpha, b)}(\cE)$ at $(x, \xi)$. Note that the definition does not depend on $b$. Moreover if $L$ is connected, the isomorphism type of microstalk does not depend on $(x, \xi)\in L$.
\end{definition}

\begin{remark}[Uniqueness]
In the nonexact setting, the present definition of brane structure does not characterize sheaf quantization. To rigidify it, we have to include bounding cochains. We will treat it in another publication.
\end{remark}

\begin{remark}[Sheaf quantization in $\Sh(M)$ and $\Sh_{\tau>0}(M\times\bR_t)$]

(Version 0). For a conic Lagrangian submanifold $L$ in $T^*M$, a sheaf quantization $\cE$ of $L$ is a constructible sheaf with $\SS(\cE)=L$. 

(Version 1). If $L$ is an exact Lagrangian submanifold, we can take a primitive of $\lambda_{st}|_L$ globally on $M$. Then we can lift $L$ to $L_f\subset T^*M\times T^*_{>0}\bR_t$. An object $\cE\in \Sh_{\tau>0}(M\times \bR_t)$ on $M\times \bR_t$ is said to be a sheaf quantization of $L$ if $\SS(\cE)\bs T^*_{M\times \bR_t}M\times \bR_t= L_f$ and has finite-dimensional pure microstalks. Under certain assumptions on $L$, Guillermou and Jin--Treumann constructed such sheaves~\cite{Guillermou, JT}. This notion is a generalization of Version 0 in the following sense: a sheaf quantization $\cE$ of a conic Lagrangian $L$ gives a sheaf on $M\times \bR_t$ by $\cE\boxtimes \bK_{[0,\infty)}$, which turns out to be a sheaf quantization (Version 1) of $L$.

(Version 2). Our concept of sheaf quantization generalizes both. For a sheaf quantization (Version 1) $\cE\in \Sh_{\tau>0}(M\times \bR_t)$ of an exact Lagrangian $L$, consider $\bigoplus_{c\in \bR}T_c\cE$ with the obvious $\bR_t$-equivariant structure. Then this is a sheaf quantization of $L$ in our sense.
\end{remark}

\begin{example}[Graph]\label{ex:graphmicrolocalization}
    Let $f$ be a smooth function $M\rightarrow \bR$. We will denote the graph of the differential $df$ by $L$. It is well-known that the microsupport of the sheaf $\bK_{t\geq -f(x)}$ can be computed as
    \begin{equation}
        \SS(\bK_{t\geq -f(x)})\cap \{\tau>0\}=L_f.
    \end{equation}
    The direct sum $\cE_{f}:=\bigoplus_{c\in \bR}\bK_{t\geq -f(x)+c}=\frakf^L(\bK_{t\geq -f(x)})$ has $\musupp(\cE_f)=L$. As we have seen in Example~\ref{ex:graphgrading} and Example~\ref{ex:graphPin}, $L$ is canonically equipped with a brane structure. 

    We would like to compute the microlocalization. The projection $\pi\colon L_f\rightarrow M$ is a diffeomorphism. By the Weinstein theorem, we have a symplectomorphism between a neighborhood of $L_f$ and a neighborhood of $M$ induced by the projection. Quantizing this symplectomorphism, $\cE_f$ is mapped to $\bigoplus_{c\in \bR}\bK_{t\geq c}$.

    By using the standard brane structure of the zero section, the local section of the microlocalization over $U\subset M$ is given by
    \begin{equation}
        \Gamma_{\lc t\geq 0\rc}(U\times \{t=0\}, \bigoplus_{c\in \bR}\bK_{t\geq c})\cong \lim_{\substack{\longrightarrow\\ c\rightarrow +0}}\Hom_{\Sh(U\times \bR)}(\bK_{c>t\geq 0}, \bigoplus_{c\in \bR}\bK_{t\geq c}).
    \end{equation}
    By the computation of the proof of Lemma~\ref{lem:Novcomputation}, the above colimit in $H^0$ is given by 
    \begin{equation}
        \lim_{\substack{\longrightarrow\\ c\rightarrow +0}}H^0(\Hom_{\Sh(U\times \bR)}(\bK_{c>t\geq 0}, \bigoplus_{c\in \bR}\bK_{t\geq c}))\cong\lim_{\substack{\longrightarrow\\ c\rightarrow +0}}\Lambda_0/T^c\Lambda_0\cong \bK.
    \end{equation}
    Hence $H^0$ of the microlocalization is $\bK_L$. 
\end{example}

\section{Meromorphic flat $\hbar$-connection}
\subsection{Meromorphic flat $\hbar$-connection}
In this section, we would like to set up our general setting for differential equations. 

Let $C$ be a Riemann surface. Let $\cO_C$ be the structure sheaf. We consider the subring $\cD^\hbar_C$ of the sheaf of $\bC[\hbar]$-linear endomorphisms of $\cO_C[\hbar]:=\cO_C\otimes_\bC\bC[\hbar]$ generated by $\hbar\partial_z$ and $\cO_C$. We would like to consider the modules over $\cD^\hbar_C$.

Our fundamental examples of such modules are given by meromorphic flat $\hbar$-connections:
\begin{definition}
A meromorphic flat $\hbar$-connection is given by the following data
\begin{enumerate}
    \item A meromorphic bundle $\cE$ with poles in $M\subset C$ where $M$ is a finite subset of $C$. In other words, a locally free $\cO_C(*M)$-module where $\cO_C(*M)$ is the sheaf of meromorphic functions with possible poles in $M$.
    \item A $\bC[\hbar]$-linear morphism $\nabla\colon \cE[\hbar]\rightarrow \cE[\hbar]\otimes_{\cO_C} K_C$ where $\cE[\hbar]:=\cE\otimes_\bC\bC[\hbar]$ and $K_C$ is the canonical bundle which satisfies the following: For any $f\in \cO_C$ and $s\in \cE$, we have $\nabla(fs)=s\otimes \hbar df+ f\nabla s$. 
\end{enumerate}
\end{definition}
A meromorphic flat $\hbar$-connection $(\cE, \nabla)$ gives a $\cD^\hbar$-module as follows: The underlying $\cO_C[\hbar]$-module is $\cE[\hbar]$ and the action of $\cD^\hbar$ is defined as follows: For a tangent vector $v\in \cD$, we have $\hbar v\in \cD^\hbar$. For $s\in \cE$, we set
\begin{equation}
    (\hbar v)\cdot s:=\nabla s(v).
\end{equation}

For $\hbarr\in \bC^\times$, we define a meromorphic connection $\cM^\hbarr:=(\cE,\nabla^\hbarr)$ by 
\begin{equation}
    \nabla^\hbarr:=\frac{1}{\hbarr}\cdot \nabla\otimes_{\bC[\hbar]}\bC
\end{equation}
where the morphism $\bC[\hbar]\rightarrow\bC$ of the tensor is given by the evaluation at $\hbarr$.

\begin{remark}
The notion of $\hbar$-connection is almost the same as Deligne's $\lambda$-connections. However, we would like to avoid the terminology in this paper, because of the existence of a non-straight-forward relations between these when one starts from a point in Hitchin's base~\cite{GaiottoTBA, Mulaseetal}.
\end{remark}

\subsection{Spectral curve}
It is well-known that $\cD^\hbar_C$ is a deformation quantization of $T^*C$. Namely, there exists a canonical isomorphism $\cD_C^\hbar/\hbar\cD_C^\hbar\cong \cO_{T^*C}$. Let $\cM$ be a $\cD^\hbar_C$-module. Then $\cM/\hbar\cM$ is a module over $\cO_{T^*C}$. We set
\begin{equation}
    \SS(\cM):=\supp(\cM/\hbar\cM)\subset T^*C.
\end{equation}
We sometimes call it the {\em spectral curve} of $\cM$. The spectral curve is holomorphic coisotropic with respect to the standard symplectic structure of $T^*C$. In the rest of this paper, we will only consider $\hbar$-connections whose spectral curve is holomorphic Lagrangian.

\section{Planck constant in complex and real symplectic geometry}
Microlocal sheaf theory is related to real symplectic geometry. On the other hand, the concept of $\hbar$-connection is related to deformation quantizations of complex symplectic manifolds. In this section, we clarify the relationship.

\subsection{Complex symplectic geometry and deformation quantization}
Let $X$ be a complex manifold. Then the cotangent bundle $T^*X$ is a complex exact symplectic manifold. The canonical symplectic form $\omega$ can be locally written as $\sum d\zeta_i\wedge dz_i$. Here $z_i$ is a local coordinate of $X$ and $\zeta_i$ is the corresponding cotangent coordinate. The form $\omega$ is exact and has a canonical primitive $\lambda$, locally written as $\sum \zeta_i dz_i$.

We will consider the canonical quantization of $T^*X$ from local pieces. The canonical deformation quantization of $T^*X$ is given by the ring $\cD^\hbar:=\bC[\hbar][z_i, \hbar \partial_{z_i}]$. In the classical limit, $\hbar\partial_z$ corresponds to $\zeta$. 

This gives a hint to make a Lagrangian into a conic Lagrangian: The original coordinate $\zeta$ splits into the product $\hbar\partial_z$ in the setting of quantization. Hence we will reconsider $\hbar$ and $\partial_z$ as independent variables. Then a Lagrangian submanifold $L\subset T^*M$ lifts to 
\begin{equation}
    \widetilde{L}:=\lc (z, p,\hbar)\in T^*X\times \bC^\times_\hbar \relmid (z, \hbar p)\in L\rc.
\end{equation}
For the later use, it is convenient to work with $\eta:=\hbar^{-1}$. To get a Lagrangian submanifold, we moreover consider the Fourier dual coordinate $y$ of $\eta$. We set
\begin{equation}
    {L}^\bC_f:=\lc (z, p, \hbar, y)\in T^*X\times \bC^\times_\hbar\times \bC_y \relmid (z, \hbar p)\in L, y= -f(z, \hbar p)\rc
\end{equation}
where $f$ is a primitive of $L$ i.e., $df=\lambda|_L$.
We consider $\bC_y\times \bC^\times_\hbar\cong T^*_{\eta\neq 0}\bC_y$. Then ${L}^\bC_f$ is a Lagrangian manifold in $T^*X\times T^*_{\eta\neq 0}\bC_y$ with respect to the standard symplectic structure and is conic i.e., invariant under the scaling action of $\bC^\times_\hbar$ on the fibers of $T^*X\times T^*_{\eta\neq 0}\bC_y$. 

There exists a projection
\begin{equation}
    \rho^\bC\colon T^*X\times T^*_{\eta\neq 0}\bC_y\rightarrow T^*X; (z, \zeta, y, \eta)\mapsto (z, \zeta/\eta)=(z, \hbar\zeta).
\end{equation}
Then the image of $L^\bC_f$ is precisely $L$. Conversely, $L^\bC_f$ is a Lagrangian leaf of $(\rho^\bC)^{-1}(L)$. This procedure is a classical trick in the theory of deformation quantization (for example, \cite{PS}).

\subsection{From complex symplectic geometry to a family of real symplectic geometries}

Considering $X$ as a real manifold and denote it by $X_\bR$. We have $T^*X_\bR$ the real cotangent bundle equipped with the standard real symplectic structure $\omega_{st}$. 

In a holomorphic local coordinate $\{z_i\}$ of $X$, we have the associated real coordinate $(x_i, y_i)$ with $z_i=x_i+\sqrt{-1}y_i$. We can further identify the real and complex cotangent bundles via
\begin{equation}
dx_i\mapsto dz_i, dy_i\mapsto -\sqrt{-1}dz_i.
\end{equation}
Hence in the cotangent coordinate, we have $(\xi_i, \eta_i)\mapsto \xi_i-\sqrt{-1}\eta_i=\zeta_i$. This implies that $\Re(\omega)=\omega_{st}$. In the following, we will not distinguish $T^*X$ with $T^*X_\bR$.

Hence a holomorphic Lagrangian submanifold with respect to $\omega$ gives a real Lagrangian submanifold with respect to $\omega_{st}$.

Now we are going to vary the holomorphic symplectic form. For $\hbarr\in\bC^\times_\hbar$, we have another holomorphic symplectic form $\omega/\hbarr$. This induces a new real symplectic form $\Re(\omega/\hbarr)$ on $T^*X$. A holomorphic Lagrangian submanifold with respect to $\omega$ is also a holomorphic Lagrangian with respect to $\omega/\hbarr$. Hence it is also a real Lagrangian submanifold with respect to $\Re(\omega/\hbarr)$.

If $L$ is a holomorphic exact Lagrangian with respect to $\lambda$, then $L$ is also a real exact Lagrangian with respect to $\Re(\lambda/\hbarr)$. In this case, a real primitive function $f_\hbarr\colon L\rightarrow \bR$ of $\Re(\lambda/\hbarr)|_L$ can be also considered as a real primitive function of $L/\hbarr$ with respect to $\Re(\lambda)=\lambda_{st}$.

\begin{remark}
Since microlocal sheaf theory knows only about the standard symplectic form, to speak about $\Re(\omega/\hbarr)$, we will consider the following identification.
\begin{equation}
    (T^*X, \Re(\omega/\hbarr))\cong (T^*X, \Re(\omega/\hbarr))\xrightarrow{\cdot (1/\hbarr)} (T^*X, \Re\omega)\cong (T^*X, \omega_{st})
\end{equation}
Hence a sheaf quantization of a real Lagrangian submanifold $L$ on the leftmost side is interpreted as a sheaf quantization of $L/\hbarr$ in the rightmost side.
\end{remark}

For the Liouville form $\Re(\lambda/\hbarr)$, the associated $\rho$ can be written as follows:
\begin{equation}
\rho_\hbarr\colon T^*X\times T^*_{\tau>0}\bR_t\rightarrow  T^*X; (z, \zeta, t, \tau)\mapsto (z, \hbarr\zeta/\tau).
\end{equation}
If the context is clear, we will omit the subscript $\hbarr$.

We can compare this picture with the story of deformation quantization. We will consider $\bR_{\tau>0}$ is a ray in $\bC^\times_\eta$ defined by $\hbarr^{-1}\bR_{\tau>0}$. Then we set the following morphism
\begin{equation}
\begin{split}
    \iota\colon T^*X\times T^*_{\tau>0}\bR_t\rightarrow T^*X\times T^*_{\eta \neq 0}\bC_y; (z, \zeta, t, \tau)\mapsto (z, \zeta, \hbarr t, \hbarr^{-1}\tau).
\end{split}
\end{equation}
Then $\rho^\bC\circ \iota=\rho_\hbarr$. This means $\rho_\hbarr$ precisely corresponds to a rotation of $\rho_{st}$ in Example~\ref{rho} by $\hbarr$.

\subsection{Sheaf quantizations in complex cotangent bundles}
Now we redefine sheaf quantization at $\hbarr\in \bC^{\times}_{\hbar}$ as follows.
\begin{definition}
Let $L$ be a holomorphic Lagrangian submanifold of $T^*X$. An object $\cE$ of $\Sh^\bR_{\tau>0}(X\times \bR_t)$ is a {\em sheaf quantization} of $L$ at $\hbarr$ if it is a sheaf quantization of $L/\hbarr$.
\end{definition}
If one uses $\rho_\hbarr$ in the definition of of $\musupp$, we obtain a variant of $\musupp$. We denote it by $\musupp_\hbarr$. Then, for a sheaf quantization $\cE$ of $L$ at $\hbarr$, we have $\musupp_\hbarr(\cE)=L$. We denote the full subcategory of $\Sh^\bR_{\tau>0}(X\times \bR_t)$ spanned by the objects with $\musupp_\hbarr(\cE)\subset L$ by $\mu_{L, \hbarr}(T^*M)$. Of course, $\mu_{L, \hbarr}(T^*M)=\mu_{L/\hbarr}(T^*M)$

Fix a path from $1$ to $\hbarr$ in $\bC^\times$. It gives a Lagrangian isotopy from $L$ to $L/\hbarr$. Hence, if $(L, \alpha, b, \cL)$ is a Lagrangian brane, $L/\hbarr$ is also canonically equipped with a Lagrangian brane structure $(L, \alpha_\hbarr, b_\hbarr, \cL_\hbarr)$. Hence we have the microlocalization functor $\mu_{(L, \alpha_\hbarr, b_\hbarr, \cL_\hbarr)}\colon \mu_{L, \hbarr}(T^*M)\rightarrow \mathrm{Loc}(L/\hbarr)$. On the other hand, we have $\mathrm{Loc}(L)\cong \mathrm{Loc}(L/\hbarr)$ by the above isotopy. By the composition, we get a functor $\mu_{(L, \alpha, b, \cL),\hbarr}\colon \mu_{L, \hbarr}(T^*M)\rightarrow \mathrm{Loc}(L)$.

\section{Sheaf quantizations associated to meromorphic flat $\hbar$-connections of rank 0 and 1}
In this section, as a warm-up, we would like to construct sheaf quantizations associated to rank 0 and rank 1 connections. We do not need exact WKB analysis here. The material of this section can be applied to any dimensions after minor modifications. In this section, $\bK=\bC$.

\subsection{What will we do?}
For a given $\cD^\hbar_C$-module $\cM$ and $\hbarr\in \bC^\times_\hbar$, what we will do here is to construct a sheaf quantization $S^\hbarr_\cM$ such that
\begin{enumerate}
    \item $\musupp(S^\hbarr_\cM)=\SS(\cM)$, and
    \item $S^\hbarr_\cM$ is constructed out of the  solutions of $\cM$.
\end{enumerate}
The second requirement is not mathematically well-defined, but one can see the meaning from the construction below. The sheaf quantization we will construct below is ``correct", since it is partially recovered from $\hbar$-Riemann--Hilbert correspondence~\cite{hRH}.

\subsection{Rank 0}
For a point $z\in C$, the cotangent fiber $T^*_zC$ is a Lagrangian submanifold. We would like to consider deformation quantization and sheaf quantization of $T^*_zC$. Note that $T^*_zC$ is canonically equipped with a grading $\alpha_{triv}$ (Example~\ref{ex:cotfibergrading}) and a relative Pin structure $b_{triv}$ (Example~\ref{ex:cotfiberPin}). 

Hence, a finite set of points $\{z_i\}_{i\in I}\subset C$, the union $L:=\bigcup_{i\in I}T^*_{z_i}C$ is canonically equipped with a grading $\alpha_{triv}$ and a relative Pin structure $b_{triv}$.  With a constant rank 1 local system $\bK_L$, we consider $L$ as a Lagrangian brane. We then have the associated microlocalization functor $\mu_{(L, \alpha_{triv}, b_{triv}, \bK_L)}\colon \mu_{L}(T^*C)\rightarrow \mathrm{Loc}(L)$. Note that $L=L/\hbarr$ in this case.

A connection of rank 0 means a $\cD^\hbar_C$-module supported on a 0-dimensional subvariety. Hence it is a direct sum of the form
\begin{equation}
    \bigoplus_{i\in I}\cD^\hbar_C/(z-z_i)^{n_i}\cD^\hbar_C\cong \bigoplus_{i\in I}(\cD^\hbar_C/(z-z_i)\cD^\hbar_C)^{\oplus n_i}.
\end{equation}
Here $\{z_i\}_{i\in I}$ is a subset of $C$.

The ``solution" of the equation $\cD^\hbar_C/(z-z_0)\cD^\hbar_C$ should be the delta function. Then it is canonical to associate a sheaf
\begin{equation}
    \bigoplus_{c\in \bR} \bK_{z_0}\boxtimes \bK_{[c, \infty)}\in \Sh^\bR_{\tau>0}(C\times \bR_t)
\end{equation}
with a canonical equivariant structure, to $\cD_C^\hbar/(z-z_0)\cD_C^\hbar$. Here $\bK_{z_0}$ is the skyscraper sheaf on $z_0$.
It is easy to see that $\musupp_\hbarr(\bigoplus_{c\in \bR} \bK_{z_0}\boxtimes \bK_{[c, \infty)})=T^*_{z_0}C$ which coincides with $\SS(\cD_C^\hbar/(z-z_0)\cD_C^\hbar)=T^*_{z_0}C$.

Consequently, we get the bijection of the form
\begin{equation}
    \bigoplus_{i\in I}\cD^\hbar_C/(z-z_i)^{n_i}\cD^\hbar_C \mapsto  \bigoplus_{i\in I}\bigoplus_{c\in \bR}\lb \bK_{z_i}\boxtimes \bK_{[c, \infty)}\in \Sh^\bR_{\tau>0}(C\times \bR_t)\rb^{\oplus n_i}.
\end{equation}

We can summarize the consequence as follows:
\begin{proposition}
Fix $\hbarr\in \bC_\hbar$. For any meromorphic $\hbar$-connection $\cM$ of rank $0$, there exists a sheaf quantization $\cS^\hbarr_\cM$ such that 
\begin{enumerate}
    \item $\musupp_\hbarr(\cS^\hbarr_\cM)=\SS(\cM)$, and  
    \item the microlocalization $\mu_{(L, \alpha_{triv}, b_{triv}, \bK_L)}(\cS^\hbarr_\cM)$ at $z$ is pure and its rank is the same as the rank of $\cM$ at $z$ for any $z\in C$.
\end{enumerate}
\end{proposition}
\begin{proof}
    We set 
    \begin{equation}
        \cS^\hbarr_\cM:=\bigoplus_{i\in I}\bigoplus_{c\in \bR}\lb \bK_{z_i}\boxtimes \bK_{[c, \infty)}\in \Sh^\bR_{\tau>0}(C\times \bR_t)\rb^{\oplus n_i}.
    \end{equation}
We have already seen that this satisfies the first condition. It is also easy to see the second condition.
\end{proof}

\subsection{Rank 1}
Let $\cM$ be a rank 1 meromorphic flat $\hbar$-connection with poles in $M\subset C$.
On a sufficiently small open subset $U$ in $C$, we can write the equation of flat section of $\cM$ as 
\begin{equation}
    (\hbar\partial-Q(z,\hbar))\psi=0
\end{equation}
where $Q(z, \hbar)=\sum_{i=0}^kQ_i(z)\hbar^i$ for some finite $k$ and each $Q_i$ is a meromorphic function. In $T^*U,$ the spectral curve $\SS(\cM)$ is defined by $\xi=Q_0$. In the following, we set $L:=\SS(\cM)\cap T^*(C\bs M)$.

The meromorphic function $Q_0$ is globally a meromorphic 1-form. Hence the spectral curve 
 $L$ is canonically equipped with a grading (Example~\ref{ex:graphgrading}) and a relative Pin structure (Example~\ref{ex:graphPin}).

For $\hbarr\in \bC^\times$, by taking a path from 1 to $\hbarr$, we obtain a Lagrangian isotopy from $L$ to $L/\hbarr$. Hence we obtain a grading $\alpha_h$ and a relative Pin structure $b_h$ on $L/\hbarr$. With a rank 1 constant local system $\bK_{L/\hbarr}$, we have the associated microlocalization functor $\mu_{(L, \alpha_h, b_h, \bK_L),\hbarr}\colon \mu_{L,\hbarr}(T^*C)\rightarrow \mathrm{Loc}(L)$.

Take an open covering $\{U_i\}$ of $C\bs M$ such that each intersection is contractible. Fix $\hbarr\in \bC^\times$ and a base point $z_i$ in each $U_i$. We can solve the equation explicitly:
\begin{equation}
    \psi(z,\hbarr)=\exp\lb \int_{z_i}^z\frac{Q(z,\hbarr)}{\hbarr}dz\rb
\end{equation}
On each $U_i\times \bR_t$, we consider the equivariant sheaf $\bigoplus_{c\in \bR}T_c\bK_{t\geq -\Re\int_{z_i}^z \frac{Q_0}{\shbarr}}$ where the equivariant structure is the obvious one.

We have the following:
\begin{lemma}\label{lem:microsuppestimate1}
    $\musupp_\hbarr (\bigoplus_{c\in \bR}T_c\bK_{t\geq -\Re\int_{z_i}^z \frac{Q_0}{\shbarr}})=L\cap T^*U_i$. Moreover, the microlocalization along $L\cap T^*U_i$ is the constant sheaf.
\end{lemma}
\begin{proof}
    By Example~\ref{ex:graphmicrolocalization}, we have $\musupp_\hbarr (\bigoplus_{c\in \bR}T_c\bK_{t\geq -\Re\int_{z_i}^z \frac{Q_0}{\shbarr}})=(L\cap T^*U_i)/\hbarr$ and the microlocalization is the constant sheaf. Multiplying $\hbarr$, we obtain the desired result. 
\end{proof}

On each overlap $U_i\cap U_j$, we consider the isomorphism between $\bigoplus_{c\in \bR}T_c\bK_{t\geq -\Re\int_{z_i}^z \frac{Q_0}{\shbarr}}$ and $\bigoplus_{c\in \bR}T_c\bK_{t\geq -\Re\int_{z_j}^z \frac{Q_0}{\shbarr}}$ induced by
\begin{equation}\label{eq:gluing}
    \exp\lb\int_{z_i}^{z_j}\frac{Q(z,\hbarr)}{\hbarr}dz\rb\cdot \id\colon \bK_{t\geq -\Re\int_{z_i}^z \frac{Q_0}{\shbarr}}\rightarrow \bK_{t\geq -\Re\int_{z_i}^z \frac{Q_0}{\shbarr}}=T_{-\Re\int_{z_i}^{z_j} \frac{Q_0}{\shbarr}}\bK_{t\geq -\Re\int_{z_j}^z \frac{Q_0}{\shbarr}}.
\end{equation}
By Lemma~\ref{lem:microsuppestimate1}, these gluing-up isomorphisms give a sheaf quantization of $L$ at $\hbarr$. 

If one chooses another base point $z_i'$ for $U_i$, one can identify the corresponding sheaves $\bigoplus_{c\in \bR}T_c\bK_{t\geq -\Re\int_{z_i}^z \frac{Q_0}{\shbarr}}$ and $\bigoplus_{c\in \bR}T_c\bK_{t\geq -\Re\int_{z_i'}^z \frac{Q_0}{\shbarr}}$ in the same way as in (\ref{eq:gluing}) replacing $z_j$ with $z_i'$. In this sense, the constructions here does not depend on the choices of base points.

On the other hand, the localization of $\cM^\hbarr$ on $C\bs M$ defines a local system 
 $\mathrm{Sol}(\cM^\hbar)$ on $C\bs M$. Under the projection, we have an equivalence $\pi_*\colon \mathrm{Loc}(L)\rightarrow \mathrm{Loc}(C\bs M)$.
\begin{proposition}\label{thm:rank1}
Given a rank 1 meromorphic $\hbar$-connection $\cM$ as above, there exists a sheaf quantization $S_\cM^\hbarr$ of $\SS(\cM)\cap T^*(C\bs M)$ at $\hbarr$ such that $\pi_*\mu_{(L, \alpha_h, b_h, \bK_L),\hbarr}(S_\cM^\hbarr)\cong \mathrm{Sol}(\cM^\hbarr)$.
\end{proposition}
\begin{proof}
The statement about $\musupp$ immediately follows from Example~\ref{ex:graphmicrolocalization}. Locally in $T^*U_i$, the microlocalization is given by $\bK_{L\cap T^*U_i}$ from Example~\ref{ex:graphmicrolocalization}. Each overlapping $L\cap T^*(U_i\cap U_j)$, we have two different identifications of the microlocalization with $\bK_{L\cap T^*(U_i\cap U_j)}$. They are related by the gluing morphism: the multiplication by $\exp\lb\int_{z_i}^{z_j}\frac{Q(z,\hbarr)}{\hbarr}dz\rb$. Hence we get the desired local system.
\end{proof}

\section{Schr\"odinger equations}
We would like to recall some basic facts of exact WKB analysis. We refer to \cite{KawaiTakei} and \cite{IwakiNakanishi} for more accounts. In this section, we concentrate on the case arising from Schr\"odinger equations.

\begin{remark}
Although it is probably not difficult to extend the result to more general rank 2 connections, some necessary results in exact WKB analysis is not available in the literature. Therefore we present our result in the simplest setup, which is enough to present the essence of the construction. The author's most general treatment in the previous version contains some gaps. Another approach can be found in \cite{hRH}.
\end{remark}
\subsection{Schr\"odinger equation and connection}
We consider a second order differential operator $\cQ$ on $C$ with a parameter $\hbar$ locally written as
\begin{equation}\label{globalschrodinger}
    \cQ=(\hbar\partial)^2-Q(z, \hbar).
\end{equation}
We call this operator {\em Schr\"odinger operator}. Here $Q(z,\hbar)=\sum_{i=0}^kQ_i(z)\hbar^i$ for some $k$ and each $Q_i$ is a meromorphic function. We fix a divisor $M$ such that each $Q_i$ is smooth outside $M$. Then $Q_0$ defines a global meromorphic quadratic differential on $C$ i.e., a global section of $K^{\otimes 2}(*M)$ for a divisor $M$.

To lift a Schr\"odinger operator to a flat $\hbar$-connection, fix a square root $\sqrt{K}$ of $K$, which is well-known to be equivalent to choose a spin structure. On $C\bs M$, we can define a connection by
\begin{equation}
    \cM:=\lb \sqrt{K}\oplus \sqrt{K}^{-1},  \hbar\partial+\begin{pmatrix}
    0& Q\\
    1 & 0
    \end{pmatrix}\rb.
\end{equation}
We call an $\hbar$-connection obtained in this way a Schr\"odinger-type connection. We say $Q_\cM:=Q_0$ is the underlying quadratic differential of $\cM$. We call the equation of the flat sections of this connection the Schr\"odinger equation of $\cM$.

\begin{comment}
\begin{remark}
Over each Stokes region (defined below), the bundle $\sqrt{K}$ is trivialized, and a flat section of the connection is equivalent to a solution of the differential operator (\ref{globalschrodinger}).
\end{remark}

\end{comment}

\subsection{WKB solutions}
For a Schr\"odinger operator $\cQ$, consider a formal solution of the form $\psi(z, \hbar)=\exp{\lb\int^z \sum_{m=-1}^\infty\hbar^mP_m dz\rb}$, then one can derive a system of equations governing $P_m(z)$'s:
\begin{equation}\label{EquationP}
    \begin{split}
        &P_{-1}^2=Q_0\\
        &2P_{-1}P_m+\sum_{\substack{m_1+m_2=m-1\\ m_1, m_2\geq 0}}P_{m_1}P_{m_2}+\frac{dP_{m-1}}{dz}=Q_{m-1}(z).
    \end{split}
\end{equation}
The first equation has two solutions: $P_{-1}^{\pm}=\pm \sqrt{Q_0}$. The second equation has unique solutions for a choice of $\pm$. We write these two solutions by
\begin{equation}
\begin{split}
        P^{\pm}(z, \hbar)&:=P^{\pm}_{od}(z, \hbar)+P_{ev}(z, \hbar)\\
        P^{\pm}_{od}&:=\pm \frac{1}{2}(P^+-P^-)\\
        P_{ev}&:=\frac{1}{2}(P^++P^-)\\
\end{split}
\end{equation}
Note that $P_{ev}$ does not depend on the choice of $\pm$, and $P_{od}^+$ differs from $P_{od}^-$ only by the overall $\pm$. We also have
\begin{equation}
    P_{ev}=-\frac{1}{2}\frac{1}{P_{od}}\frac{dP_{od}}{dz}.
\end{equation}
This means $P_{ev}=\frac{\partial}{\partial z}\log(1/ \sqrt{P_{od}})$ formally.
Hence we can formally rewrite $\psi(z, \hbar)$ as
\begin{equation}\label{WKB}
    \begin{split}
        \psi_{\pm}(z, \hbar)&=\exp\lb\pm \int^zP_{od}dz\rb\exp \lb\int^zP_{ev}\rb\\
        &=\frac{1}{\sqrt{P_{od}}}\exp\lb\pm \int^zP_{od}\rb
    \end{split}
\end{equation}
These two solutions are called WKB solutions of (\ref{globalschrodinger}). 

One can expand the WKB solutions as follows:
\begin{equation}
    \psi_\pm(z, \hbar)=\exp\lb\int_{z_0}^z\frac{\pm\sqrt{Q_0}}{\hbar}dz\rb \sum^\infty_{m=0}\hbar^{m+\frac{1}{2}}\psi_m^\pm(z).
\end{equation}
The aim of exact WKB analysis is to lift these formal solutions to analytic solutions. 

\begin{definition}[Resummation]
For a point $z\in C$ and $\hbarr\in \bC^\times$, a resummation $\Psi_{\pm}(z, \hbar)$ of $\psi_{\pm}(z, \hbar)$ is defined as follows: We set
\begin{equation}
    S_\hbarr:=\lc \hbar\in \bC^\times\relmid |\hbar/\hbarr|<2, -\pi/2<\arg(\hbar/\hbarr)<\pi/2 \rc.
\end{equation}
A resummation of $\Psi_{\pm}$ at $(z, \hbarr)$ is a holomorphic function on $U\times S_\hbarr$ for some open neighborhood $U$ of $z$ such that $\Psi_{\pm}(\hbar, z)$ is asymptotically expanded to $\psi_\pm(z, \hbar)$ as $\hbar\rightarrow 0$.

\end{definition}
The rest of this section is devoted to explain the known results of resummations of WKB solutions.

\subsection{Stokes geometry}
Let $\cM$ be a meromorphic flat $\hbar$-connection of Sch\"odinger-type.
We introduce the languages of Stokes geometry in this setting. See \cite{IwakiNakanishi, BridgelandSmith} for the details. We set $L:=\SS(\cM)$.
\begin{definition}
We fix $\hbarr\in \bC^\times$.
\begin{enumerate}
    \item A turning point of $\cM$ is a branched point of $L$. We denote the set of turning points of $\cM$ by $\Zero(\cM)$. Equivalently, $\Zero(\cM)$ is the zero set of $Q_\cM$.
    \item For a turning point $v$ of $\cM$, an $\hbarr$-Stokes curve emanating from $v$ is a subset of the closure of
    \begin{equation}
        \lc z\in C\bs M\relmid \Im\lb \int_v^z\lb \frac{\xi_+(z')-\xi_-(z')}{\hbarr}\rb dz'\rb=0\rc
    \end{equation}
    such that the subset is homeomorphic to a connected interval, one of the boundary is $v$, and the other boundary is another turning point or in $M$. Here $\xi_+$ and $\xi_-$ are the restrictions of $\lambda$ to the sheets of $L$. Equivalently, $\xi_\pm=\pm\sqrt{Q_0}$.
   \item The $\hbarr$-Stokes graph $G_\cM$ of $\cM$ is the union of the $\hbarr$-Stokes curves of $\cM$.
   \item An $\hbarr$-Stokes region is a connected component of the complement of the closure of the $\hbarr$-Stokes graph in $C$.
   \item A subset of the $\hbarr$-Stokes graph is called an $\hbarr$-Stokes segment if it is diffeomorphic to a closed interval $[a,b]\subset \bR$ ($a<b$) and it connects two turning points.
   \end{enumerate}
\end{definition}

We would like to introduce an easy class of connections.
\begin{definition}[\cite{BridgelandSmith}]\label{weaklyGMN}
We say a quadratic differential $Q_0$ is (complete) GMN if 
\begin{enumerate}
 \item the order of any pole of $Q_0$ is more than or equal to 2,
    \item $Q_0$ has at least one pole and one zero,
    \item every zero of $Q_0$ is simple.
\end{enumerate}
We say a meromorphic flat $\hbar$-connection $\cM$ is GMN if the underlying quadratic differential $Q_\cM$ is GMN.
\end{definition}

For GMN connections, their Stokes regions are easy:
\begin{theorem}[{Strebel~\cite{Strebel}, Bridgeland--Smith~\cite{BridgelandSmith}}]
Suppose $\cM$ is GMN and the $\hbarr$-Stokes graph of $Q_\cM$ does not have any $\hbarr$-Stokes segments. Then an $\hbarr$-Stokes region of $\cM$ has one of the following forms:
\begin{enumerate}
    \item(horizontal strip). a square surrounded by four edges of the $\hbarr$-Stokes graph. Two of four vertices are poles of $Q_\cM$, and the others are turning points.
    \item(half plane). a region surrounded by two edges of the $\hbarr$-Stokes graph. One of the vertices is a turning point, and the others are poles of $Q_\cM$.
\end{enumerate}
\end{theorem}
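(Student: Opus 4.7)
The plan is to apply Strebel's classification of horizontal trajectories of the meromorphic quadratic differential $Q_\cM/\hbarr^2$ on $C$. The $\hbarr$-Stokes curves are, by definition, the horizontal trajectories of this differential, and an $\hbarr$-Stokes region is a connected component of the complement of the critical graph. Strebel's theorem states that every such component is of one of four types: a half-plane, a horizontal strip, a ring domain, or a spiral domain. The task is therefore to rule out ring and spiral components under the weakly GMN and no-Stokes-segment hypotheses, and then to read off the boundary combinatorics in the remaining two cases.

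On any simply connected open set $U$ avoiding zeros and poles I would introduce the distinguished parameter $w(z) = \int_{z_0}^z \sqrt{Q_\cM}/\hbarr\, dz'$. This is a local biholomorphism sending horizontal trajectories to the lines $\Im w = \mathrm{const}$, so each Stokes region $R$ maps biholomorphically onto a horizontal strip $\{a < \Im w < b\}\subset \bC$, with $a=-\infty$ or $b=+\infty$ allowed. A boundary analysis in this coordinate then pins down the shape of $R$: a simple zero of $Q_\cM$ corresponds under $w$ to a corner at which two horizontal boundary segments meet (with three trajectories emanating in $C$ at angles $2\pi/3$), while a pole of order $\geq 2$ corresponds to an ideal end at $w = \infty$. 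Enumerating the possible boundary cycles using the fact that exactly three Stokes curves emanate from each simple zero then produces the two cases in the statement: the horizontal strip case has both $a$ and $b$ finite (two zeros and two poles on the boundary), and the half-plane case has exactly one of $a, b$ infinite (one zero and two poles).

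The main obstacle is excluding ring and spiral domains. A ring domain is foliated by closed horizontal trajectories whose $w$-periods are purely real; its boundary components are either unions of Stokes segments or single double poles with purely imaginary residue. A spiral domain is the union of trajectories whose forward limit set is a minimal lamination. Under weakly GMN (at least one pole of order $\geq 2$ and at least one branch point) together with the no-segment assumption, I would argue by a standard limit-set argument that every non-critical trajectory tends to a pole in both forward and backward time: recurrence would force the closure of a trajectory to contain a critical leaf, producing a Stokes segment and contradicting the hypothesis. This recurrence dichotomy is the technical heart of the Strebel/Bridgeland--Smith treatment, and I expect to invoke their argument rather than reprove the full classification here.
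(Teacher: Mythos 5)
Your proposal is correct and follows essentially the same route as the paper: the paper's proof simply notes that weak GMN-ness supplies a branch point and a pole and then invokes the proof of Bridgeland--Smith's Lemma 3.1, which is precisely the argument you sketch (Strebel's four-fold classification of complementary domains, exclusion of ring and spiral domains because their boundaries would contain saddle trajectories, i.e.\ Stokes segments, or force the degenerate surfaces ruled out by the weakly GMN conditions, and then the boundary analysis in the distinguished coordinate $w$). Since you, like the paper, ultimately defer the recurrence/limit-set argument to Strebel and Bridgeland--Smith, there is no substantive difference in approach.
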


\begin{assumption}
In the rest of this section, we assume that the quadratic differential $Q_0$ is GMN.
\end{assumption}

\subsection{WKB regularity}
To obtain the resummability of WKB solutions, we have to assume that the higher degree terms with respect to $\hbar$ in the differential operator is tame in some sense.

Locally around a pole $p$ of $\cM$, the equation can be written as 
\begin{equation}\label{usualschrodinger}
    ((\hbar\partial)^2-Q(z, \hbar))\psi=0.
\end{equation}

\begin{assumption}[WKB-regularity]\label{WKBregular}
We say $\cM$ is WKB-regular if, around any pole $p$, $Q(z,\hbar)=Q_0(z)+\hbar Q_1(z)+\hbar^2Q_2(z)+\cdots$ satisfies the following:
\begin{enumerate}
    \item $p$ is a pole of $Q_0$. 
    \item If $Q_0$ has a pole $p$ of order $m\geq 3$, then the pole order of $Q_i (i\geq 1)$ at $p$ is less than $1+m/2$. 
    \item If $Q_0$ has a pole of order $i=2$, the following conditions hold: $Q_i$ has at most one simple pole at $p$ for all $i\geq 1$ except for $n=2$, and $Q_2(z)$ has a double pole at $p$ and satisfies $Q_2(z)=-\frac{1}{4(z-p)^2}(1+O(z-p))$ as $z\rightarrow p$.
\end{enumerate}
\end{assumption}
This is defined in \cite{IwakiNakanishi}, which attributes it to Koike--Sch\"afke. 

We would like to explain an important property of WKB-regularity.
\begin{definition}
For a singularity $z$ of a meromorphic flat connection $\cN$ (especially, of a linear differential equation) on $C$ without $\hbar$, the formal completion of $\cN$ at $0$ is, after taking a branched covering $z\mapsto z^k$, isomorphic to the form
\begin{equation}
    \bigoplus_i \cE^{f_i}\otimes \cL_i 
\end{equation}
where $i$ runs through finite indices, each $\cL_i$ is regular singular, each $f_i$ is Laurent polynomial, and $\cE^{f_i}$ is rank 1 connection defined by $d+ df_idz$. This is known as Hukuhara--Levelt--Turrittin theorem. The set of Laurent--Puiseux polynomial $\frakI_{\cN,0 }=\{g_i(z):=f_i(z^{1/k})\}_{i\in I}$ is uniquely determined from $\cN$ up to the following operations: (i) for some $g_i$, replace it with $g_i+g$ where $g$ is a regular Puiseux polynomial, and (ii) Galois transformation. We say the class $\frakI_{\cN, 0}$ modulo the above operations {\em the irregularity type of $\cN$}. The rank of the irregularity type $\frakI_{\cN, 0}$ is defined to be $\rank \cN$. 
Let $r_i$ be the ramification degree of $g_i$. Then the rank of the irregularity type is the same as $\sum_{i\in I} r_i$.
\end{definition}

For a fixed $\hbarr$, associated to (\ref{usualschrodinger}), we consider the equation
\begin{equation}\label{hbarrschrodinger}
    \lb\partial^2-\frac{Q}{\hbarr^2}\rb\psi=0.
\end{equation}
\begin{proposition}\label{constantformal}
The irregularity type of (\ref{hbarrschrodinger}) at a singularity is given by $\xi_\pm/\hbarr$ for any $\hbarr$.
\end{proposition}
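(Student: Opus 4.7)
The plan is to solve (\ref{hbarrschrodinger}) formally at a singularity $p$ by the WKB/Riccati ansatz and to read off the exponential factors in the Hukuhara--Levelt--Turrittin decomposition directly from the result. Setting $\psi = \exp(\int S\,dz)$, the equation (\ref{hbarrschrodinger}) becomes the Riccati equation
\[
S' + S^2 = Q(z,\hbarr)/\hbarr^2,
\]
and since the rank is two there are exactly two formal solutions $S^{\pm}$; after integration and reduction modulo holomorphic functions, these recover $f_{\pm}$ in the formal decomposition $\cE^{f_+}\otimes\cL_+ \oplus \cE^{f_-}\otimes \cL_-$. The problem is therefore to verify that $\int S^{\pm}\,dz \equiv \int \xi_{\pm}/\hbarr\,dz$ modulo regular functions, where $\xi_{\pm} = \pm\sqrt{Q_0}$.

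For a pole $p$ of $Q_0$ of order $m \geq 3$, I would proceed by dominant balance. The leading balance $S^2 \sim Q_0/\hbarr^2$ forces the most singular part of $S^\pm$ to be $\pm\sqrt{Q_0}/\hbarr = \xi_\pm/\hbarr$. WKB-regularity condition (2), $\mathrm{ord}_p Q_i < 1 + m/2$ for $i\geq 1$, guarantees that each contribution $\hbarr^{i-2}Q_i$ has pole order strictly less than that of $Q_0/\hbarr^2$; likewise $(\xi_\pm/\hbarr)'$ has pole order $m/2 + 1 < m$. A descending induction on pole order then successively determines the lower-order Puiseux coefficients of $S^\pm$, each correction producing a term of strictly smaller pole order, until after finitely many steps the remainder is holomorphic. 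Integrating, every correction contributes at most a regular function, so $\int S^\pm\,dz \equiv \int \xi_\pm/\hbarr\,dz$ modulo holomorphic functions, i.e.\ the irregularity type is $\xi_\pm/\hbarr$.

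For $m = 2$, the Riccati analysis degenerates into the indicial-equation computation at a regular singular point. Writing $Q(z,\hbarr)/\hbarr^2 = A/(z-p)^2 + B/(z-p) + \text{reg}$, WKB-regularity (3) forces the only $1/(z-p)^2$ contributions to come from $Q_0$ and $Q_2$, giving $A = c_0/\hbarr^2 - 1/4$ with $c_0$ the leading coefficient of $Q_0$. The exponents are the roots of $\rho(\rho-1) = A$, and the normalization $Q_2 = -\tfrac{1}{4(z-p)^2}(1+O(z-p))$ is engineered precisely so that $1 + 4A = (2\sqrt{c_0}/\hbarr)^2$ is an exact square at leading order in $\hbarr$, yielding exponents $\tfrac{1}{2} \pm \sqrt{c_0}/\hbarr$. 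These coincide with the residues of $\xi_\pm/\hbarr\,dz$ at $p$ up to the standard half-shift and regular terms, so the formal decomposition is $\xi_\pm/\hbarr$ in this case as well.

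The main obstacle is the dominant-balance bookkeeping in the $m \geq 3$ case: one has to verify that the sharp bound $1 + m/2$ in WKB-regularity (2) is exactly sufficient to prevent any subleading correction from contributing a new Puiseux term of order $m/2$, which would otherwise modify the exponential factor and hence the irregularity type. The analogous role of condition (3) for $m = 2$ is played by the very specific $-1/4$ coefficient of the double pole of $Q_2$, which is exactly what the indicial computation demands in order to cancel the $O(\hbarr^{0})$ contribution to the irregular exponents and leave only $\pm\sqrt{c_0}/\hbarr$.
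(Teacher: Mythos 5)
Your argument is correct, and it reaches the conclusion by a genuinely different route from the paper. The paper's proof stays inside the WKB expansion in $\hbar$: it writes the formal solution as $P_{od}^{-1/2}\exp(\int P_{od})$ and quotes Iwaki--Nakanishi (their Proposition 2.8) for the fact that, under WKB-regularity, the coefficients $\int P_{od,i}$ with $i\geq 0$ have no poles and $z^{n/2}P_{od,\geq 1}$ is a genuine power series, so that the only Laurent--Puiseux exponential left is $\exp(\int \xi_\pm/\hbarr)$. You instead fix $\hbarr$ from the start and do the local formal analysis of the resulting ODE at $p$ directly: the Riccati equation with dominant balance and a Newton-polygon-type descending induction for $m\geq 3$, and the indicial equation for $m=2$. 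What your route buys is self-containedness and the fact that it manifestly computes the Hukuhara--Levelt--Turrittin data of the specialized equation, which is what the statement literally asserts; the paper's route buys brevity and makes visible exactly which clause of Assumption~\ref{WKBregular} is used where (it is the same mechanism in both proofs: the bound $1+m/2$ on the pole order of $Q_i$, resp.\ the $-\tfrac{1}{4}$ normalization of $Q_2$, is precisely what keeps the corrections from reaching pole order above $1$). One small imprecision to fix in your write-up: the corrections to $S^\pm$ are not all of pole order strictly less than $1$ --- the term $P_{ev}=-\tfrac12(\log P_{od})'$ has an honest simple pole, so $\int S^\pm\,dz$ differs from $\int\xi_\pm/\hbarr\,dz$ by a logarithm plus a bounded function, not by a holomorphic function. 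This does not affect the conclusion, because a simple-pole contribution exponentiates to a power of $(z-p)$ and is absorbed into the regular-singular factor $\cL_\pm$ of the decomposition $\bigoplus \cE^{f_\pm}\otimes\cL_\pm$ rather than into the irregularity type $f_\pm$; you already invoke exactly this mechanism (the ``half-shift'') in your $m=2$ discussion, so you should say the same for $m\geq 3$.
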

\begin{proof}
In a local coordinate around a pole, $Q_0$ can be written as $\xi=(\frac{c_1}{z^{n/2}}+\frac{c_2}{z})dz$ with $n\geq 2$ for some $c_1, c_2\in \bC$~\cite{Strebel}. Recall from (\ref{WKB}) that the WKB solution can be written as
\begin{equation}
    \frac{1}{\sqrt{P_{odd}}}\exp\lb \int P_{odd}\rb.
\end{equation}
By \cite[Proposition 2.8]{IwakiNakanishi}, $\int P_{odd, i}$ for $i\geq 0$ does not have a pole. Hence $\exp\lb \int P_{odd, \geq 0}\rb$ is a (non-Laurent) formal power series.

Also, we have 
\begin{equation}
\begin{split}
        \frac{1}{\sqrt{P_{odd}}}&= \frac{1}{\sqrt{c_1z^{-n/2}+c_2z^{-1}+P_{\geq 1, odd}}}\\
        &=\frac{1}{z^{-n/4}}\frac{1}{\sqrt{c_1+c_2z^{n/2-1}+z^{n/2}P_{\geq 1, odd}}}\\
\end{split}
\end{equation}
Again, by \cite[Proposition 2.8]{IwakiNakanishi}, $z^{n/2}P_{\geq 1, odd}$ is a formal power series. Hence the WKB solution is
\begin{equation}
    \exp\lb \int (c_1z^{-n/2}+c_2z^{-1})/\hbarr\rb F
\end{equation}
where $F$ is a (non-Laurent) Puiseux series. This confirms the claim.
\end{proof}

\subsection{Resummation}
The WKB solution series are not convergent series. The standard way to lift the WKB solutions to analytic convergent solutions is the Borel resummation method as we recall in the below.

To implement the Borel resummation, we first expand (\ref{WKB}) as
\begin{equation}
    \psi_\pm(z, \hbar)=\exp\lb\int_{z_0}^z\frac{\xi_\pm}{\hbar}dz\rb \sum^\infty_{m=0}\hbar^{m+\frac{1}{2}}\psi_m^\pm(z).
\end{equation}
The formal Borel transform with respect to $\hbar^{-1}$ is
\begin{equation}
    \psi_{\pm, B}(z, y)=\sum_{m=0}^\infty \frac{\psi^\pm_m(z)}{\Gamma(m+\frac{1}{2})}(y\pm a(z))^{m+\frac{1}{2}}
\end{equation}where $a(z):=\int^z_{z_0}\sqrt{Q}dz$. This series is convergent. We set
\begin{equation}
    \cS[\psi_\pm](z,\hbar):=\int_{\mp a(z)}^\infty e^{-y/\hbar}\psi_{\pm, B}(z, y)dy.
\end{equation}

\begin{theorem}[{Koike--Sch\"afke, \cite[Theorem 2.18]{IwakiNakanishi}}]\label{Koike}
Suppose $\cM$ is GMN, WKB-regular, and of Schr\"odinger-type. Take $\theta\in \bR/2\pi\bZ$. Suppose that the $e^{i\theta}$-Stokes graph of $\cM$ does not have any $e^{i\theta}$-Stokes segments.
Fix $z\in C\bs G_\cM$. Then there exists $\hbarr\in \bC^\times_\hbar$ such that $\arg\hbarr=\theta$, $\psi_{\pm, B}(z, y)$ is convergent in a Stokes region $D$ containing $z$, and $S[\psi_\pm]$ is a resummation  of $\psi_{\pm}$ in $D\times \cS_\hbarr$.
\end{theorem}

\begin{remark}
The sketch of the proof of the above theorem is available in Takei~\cite{Takei}, which is a generalization of the argument in \cite{DLS}.
\end{remark}

\subsection{Voros' formula}
We would like to describe a connection formula known as Voros' formula. Let $\cM$ be a meromorphic flat $\hbar$-connection of Schr\"odinger-type. We denote the underlying spin structure by $\sqrt{K}$.

We denote the spectral curve of the connection by $L$. Considering $Q_0^{1/4}$, as a section of $\sqrt{K}$, we have a (topological) branched covering $\widetilde{\pi}\colon \widetilde{L}\rightarrow L$ branched at the branch points of $\pi\colon L\rightarrow C$ as the Riemann surface of $\sqrt{K}$.
Since (\ref{WKB}) involves $Q_0^{1/4}$, to fix a branch of (\ref{WKB}), it is necessary to designate a sheet of $\widetilde{L}$. 

Now we would like to introduce Voros' connection formula. Let $D_1, D_2$ be adjacent Stokes regions. Let $z_0$ be a turning point on the Stokes curve separating $D_1$ and $D_2$. We will use the following normalization in the expression (\ref{WKB}). Fix $z\in D_1$ and $(z, \zeta)\in \pi^{-1}(z)\subset L$. Let us denote the corresponding branch of $P_{od}$ by $P_{od}^{(z, \zeta)}$. Let $\gamma_z$ be a minimal counterclockwise loop starts from $(z,\zeta)$ surrounding $z_0$ on $L$. Then we set
\begin{equation}\label{normalization}
    \int_{z_0}^{(z,\zeta)}P_{od} dz:=\frac{1}{2}\int_{{\gamma_{z}}} P_{od}^{(z,\zeta)} dz.
\end{equation}
This is called the turning point normalization at $z_0$.
We also fix a branch of $\sqrt{P_{od}}$, which is given by a choice of $\tilde{z}\in \tilde{\pi}^{-1}(z,\zeta)$. 
We will write the corresponding resummed solutions on $D_1$ by $\Psi_{z_0, \tilde{z},1}^{D_1}$ i.e.,
\begin{equation}
    \Psi_{z_0, \tilde{z},1}^{D_1}:=\frac{1}{(\sqrt{P_{od}})_{\tilde{z}}}\exp\lb\int_{z_0}^{(z,\zeta)}P_{od}dz \rb.
\end{equation}

We also take a lift of $\gamma_z$ to $\widetilde{L}$ starting from $\tilde{z}$ and denote the endpoint by $\widetilde{z}'$. We set 
\begin{equation}
    \Psi_{z_0, \tilde{z},2}^{D_1}:=\frac{1}{(\sqrt{P_{od}})_{\tilde{z}'}}\exp\lb\int_{z_0}^{\tilde{\pi}(\tilde{z}')}P_{od}dz \rb.
\end{equation}
This is also understood to be resummed. We set $\Psi_{z_0. \tilde{z}}^{D_1}:=(\Psi_{z_0, \tilde{z},1}^{D_1},\Psi_{z_0, \tilde{z},2}^{D_1})$.

Let $\gamma_{12}$ be an interval which starts from $z$, ends at $z_2\in D_2$, and intersects with the Stokes graph exactly once at a point on the Stokes curve separating $D_1$ and $D_2$. Take the lift $\widetilde{\gamma_{12}}$ of $\gamma_{12}$ to $\widetilde{L}$ starting from $\widetilde{z}$. We denote the end point of $\widetilde{\gamma_{12}}$ by $\widetilde{z_2}$. Then we define $\Psi^{D_2}_{z_0, \widetilde{z_2}}$ in the same way as $\Psi^{D_1}_{z_0, \widetilde{z_2}}$ replacing $\widetilde{z}$ with $\widetilde{z_2}$.

\begin{theorem}[{Voros~\cite{Voros}, Aoki--Kawai--Takei~\cite{AKT}}]\label{Voros}
The analytic continuation of $ \Psi_{z_0, \tilde{z}}^{D_1}$ crossing the Stokes curve is related to $\Psi_{z_0, \tilde{z}_2}^{D_2}$ by
\begin{equation}
    \Psi_{z_0, \tilde{z}}^{D_1}= \Psi_{z_0, \tilde{z}_2}^{D_2}\cdot S
\end{equation}
where
\begin{equation}
    S=\begin{cases}
    \left(
    \begin{array}{cc}
      1 & 0  \\
     -1 & 1 
    \end{array}
  \right) \text{ if $\int_v^z\frac{\sqrt{Q_0}}{\hbarr}dz>0$ on the Stokes curve, }
  \\
  \\
  \left(
    \begin{array}{cc}
      1 & -1  \\
      0 & 1 
    \end{array}
  \right)\text{ otherwise.}
    \end{cases}
\end{equation}
\end{theorem}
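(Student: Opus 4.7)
The plan is to bootstrap from Proposition~\ref{Vorosnaive}, which already guarantees that the analytic continuation matrix $S$ is upper (resp.\ lower) triangular with nonzero diagonal entries $d_1, d_2$ and an off-diagonal entry $a$, with the triangularity pattern governed by the sign of $\int_v^z (\xi_+-\xi_-)/\hbarr\, dz$ on the Stokes curve. Since for the Schr\"odinger case $\xi_\pm = \pm\sqrt{Q_0}$, so that $(\xi_+-\xi_-)/2 = \sqrt{Q_0}$, the sign condition of that proposition matches the one in the statement to be proved. The task therefore reduces to identifying the three constants $d_1, d_2, a$ explicitly under the normalization~(\ref{normalization}).

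First I would show $d_1 = d_2 = 1$. The key point is that the lower endpoint in $\int_{z_0}^{(z,\zeta)} P_{od}\,dz$ is defined via the half-loop $\tfrac{1}{2}\int_{\gamma_z} P_{od}^{(z,\zeta)}\,dz$, and the branch of $\sqrt{P_{od}}$ is fixed by the choice of $\tilde z \in \tilde\pi^{-1}(z,\zeta)$. By construction, when $\tilde z_2$ is the endpoint of the lift $\widetilde{\gamma_{12}}$ of $\gamma_{12}$ starting at $\tilde z$, the two quantities $\int_{z_0}^{\pi(\tilde z_2)} P_{od}\,dz$ and $(\sqrt{P_{od}})_{\tilde z_2}$ are the analytic continuations along $\gamma_{12}$ of their $D_1$-counterparts. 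Hence the corresponding Borel-resummed WKB solutions agree on the dominant sheet up to the Stokes jump, forcing the diagonal entries to be $1$.

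The nontrivial content is the value $a = \mp 1$ of the off-diagonal entry. The cleanest route is to reduce locally to the Airy equation. By the exact WKB transformation theory of Aoki--Kawai--Takei~\cite{AKT}, in a neighborhood of a simple turning point one constructs a formal microdifferential transformation, compatible with Borel resummation on each adjacent Stokes region, that converts the Schr\"odinger operator to the Airy operator $(\hbar^2\partial^2 - z)$, and that sends WKB solutions normalized by~(\ref{normalization}) to the standard Airy WKB solutions normalized at the origin. The Stokes matrices of the Borel-resummed Airy function are classically known to be exactly of the form in the theorem, with off-diagonal entry $\mp 1$ whose sign is determined by the orientation of the Stokes curve, i.e.\ the sign of $\int_v^z \sqrt{Q_0}/\hbarr\, dz$.

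The main obstacle is the rigorous justification of the Airy reduction at the Borel level, specifically (i) Borel summability of the intertwining microdifferential operator on each Stokes region and (ii) compatibility of its normalization with our loop-integral convention~(\ref{normalization}). I would not reprove (i), since it is the content of \cite{AKT} (and of the Koike--Sch\"afke result already invoked as Theorem~\ref{Koike}); instead I would carefully verify (ii) by computing the leading behavior of both sides at $z_0$: the half-loop prescription in~(\ref{normalization}) is designed precisely so that $\exp\bigl(\int_{z_0}^{(z,\zeta)} P_{od}\,dz\bigr)/\sqrt{P_{od}}$ matches, after the Airy reduction, the symmetric normalization of Airy WKB solutions based at the origin. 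Once this matching is in place, the theorem follows directly from the Airy Stokes phenomenon.
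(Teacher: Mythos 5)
The paper gives no proof of this theorem: it is quoted as a classical result with attributions to Voros and Aoki--Kawai--Takei, so there is no internal argument to compare against. Your proposal is essentially the standard proof from those references (reduction to the Airy equation near a simple turning point), and its overall structure is sound. Two remarks. First, your step establishing $d_1=d_2=1$ is not independently rigorous as written: the formal WKB series normalized at the turning point continues trivially as a formal object, but the lateral Borel sums on $D_1$ and $D_2$ are a priori different analytic functions, and the assertion that they ``agree on the dominant sheet up to the Stokes jump'' is precisely part of the conclusion of the connection formula rather than an input to it. This does no harm, since the Airy reduction you invoke next determines all three constants $d_1, d_2, a$ simultaneously; you should simply read the diagonal entries off from the Airy Stokes matrices instead of arguing for them separately. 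Second, you are right that the normalization matching is the genuinely delicate point: note that the off-diagonal entry here is $-1$, whereas the connection formula is more commonly quoted with a factor $\sqrt{-1}$ (e.g.\ in Iwaki--Nakanishi's conventions). The discrepancy is absorbed by the choice of branch of $\sqrt{P_{od}}$ encoded in the lift $\tilde{z}'$ to $\widetilde{L}$ in the definition of $\Psi^{D_1}_{z_0,\tilde z, 2}$, so your verification of compatibility with~(\ref{normalization}) must track the quarter-root branches through the covering $\widetilde{L}\rightarrow L$ and not only the half-loop integral.
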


On a horizontal strip $D$, we have two turning points in its closure. We denote them by $v_1, v_2$. Take a path $\gamma_{v_1v_2}$ in $D$ connects $v_1$ and $v_2$. Take a lift $\widetilde{\gamma_{v_1v_2}}$ of $\gamma_{v_1v_2}$ on $\widetilde{L}$. For $\tilde{z}\in \widetilde{\gamma}$, we have two solutions normalized at $v_1$ and $v_2$ and denote them by $\Psi^D_{v_1, \tilde{z}}$ and $\Psi^D_{v_2, \tilde{z}}$. Then we have $\Psi^D_{v_1, \tilde{z}}=\exp(\int_{\tilde{\pi}(\widetilde{\gamma_{v_1v_2}})} P_{od}dz) \Psi^D_{v_2, \tilde{z}}$.

\section{Sheaf quantizations associated to meromorphic flat $\hbar$-connection of rank 2 of Schr\"odinger type}
In this section, $\cM$ is a meromorphic flat $\hbar$-connection satisfying the following assumption:
\begin{assumption}\label{ourassumption}
$\cM$ is GMN, WKB-regular, and of Scr\"odinger-type.
\end{assumption}
We denote the underlying Schr\"odinger operator by $\cQ$, and the spin structure by $\sqrt{K}$. We also set $\bK=\bC$ in this section.

\subsection{Construction}
We will first construct our sheaf quantization on a Stoke region $D$. Then $\widetilde{L}|_D\rightarrow D$ is a trivial covering of degree 4. 

For each Stokes edge $e$, we take a thickening of it $D_e$. Namely, $D_e$ is a small contractible open neighborhood of the relative interior of $e$ in $C\bs (M\cup \mathrm{Zero}(\cM))$. 

Fix a sheet $i$. We set
\begin{equation}
    s^v_{L}|_{D,i}:=\bigoplus_\pm \bK_{D^v_\pm}
\end{equation}
where
\begin{equation}
    D^v_\pm:=\lc (z, t)\in D'\times \bR_t \relmid t\geq \mp \Re\int_v^z(\sqrt{Q_0}/\hbarr) dz \rc,
\end{equation}
$v$ is a turning point in the closure of $D$, and $D'$ is an open subset which is a union of $D$ and the thickenings of the Stokes edges of $D$. This object does not depend on $i$, but we will keep the indices. We set
\begin{equation}\label{eq:standard}
    S^v_{L}|_{D,i}:=\bigoplus_{c\in \bR} T_cs^v_{L}|_{D,i}.
\end{equation}
This defines an object of $\Sh^\bR_{\tau>0}(D'\times \bR_t)$ with an obvious equivariant structure. 

We would like to list up the gluing-up isomorphisms:
\begin{enumerate}
\item (Change of sheets) For a sheet $i$, we denote the sheet where we will arrive by one counter clockwise loop around $v$ by $i'$, Then we define
\begin{equation}
    \varphi_{ii'}\colon  S^v_{L}|_{D,i}\rightarrow  S^v_{L}|_{D,i'}
\end{equation}
by $T$: namely, the multiplication by $\sqrt{-1}$. 
\item (Change of normalizations) For $v_1, v_2\in D$, the isomorphism $\varphi_{v_1v_2}\colon S^{v_1}_{L}|_{D,i}\rightarrow S^{v_2}_{L}|_{D,i}$ is induced by the scalar multiplication
\begin{equation}\label{eq:changeofnormal}
        \exp\lb \int_{\gamma_{v_1,v_2}}{P_{od}}\rb \cdot (-) \colon
        T_c \bK_{D^{v_1}_\pm}\rightarrow T_{c\mp\Re\int_{\gamma_{v_1v_2}}(\sqrt{Q_0}/\shbarr)dz} \bK_{D_{\pm}^{v_2}}
\end{equation}
where $\int_{\gamma_{v_1,v_2}}{P_{od}}$ is understood as its resummation.
\item (Change of regions) For adjacent Stokes regions $D_1$ and $D_2$, the Stokes edge $e$ separating two regions satisfies $\Re\int^x_v\sqrt{Q_0}/\hbarr dz <0$ or $\Re\int^x_v\sqrt{Q_0}/\hbarr dz >0$. 

We set
\begin{equation}
    (D_e)_\pm^v:=\lc (z, t)\in D_e\times \bR_t \relmid t\geq \mp \Re\int_v^z(\sqrt{Q_0}/\hbarr) dz \rc,
\end{equation}
which is a subset of $(D_1)_\pm^v\cap (D_2)_\pm^v$.

Then we have
\begin{equation}
\End\lb \bigoplus_\pm \bK_{(D_e)_\pm^v}\rb\\
        =\bK\cdot\id_{+}\oplus \bK\cdot \id_{-}\oplus \bK\cdot e
\end{equation}
where $\id_\pm\in \End\lb\bK_{(D_e)_\pm^v}\rb$ are the identities of the components and 
\begin{equation}
    e\in \begin{cases}
        &\Hom\lb \bK_{(D_e)_+^v}, \bK_{(D_e)_-^v}\rb \text{ if $\Re\int^x_v\sqrt{Q}/\hbarr dz >0$} \\
        & \Hom\lb \bK_{(D_e)_-^v}, \bK_{(D_e)_+^v}\rb \text{ if $\Re\int^x_v\sqrt{Q}/\hbarr dz <0$}
    \end{cases}
\end{equation}
is a canonical nontrivial basis coming from the inclusion of the closed sets. We can reflect the Voros connection formula using this:
\begin{equation}
    \varphi_{D_1, D_2}:=\id_++\id_--e\in \End\lb \bigoplus_\pm \bK_{(D'_1\cap D'_2)_\pm^v}\rb.
\end{equation}
We consider this morphism as an isomorphism $\varphi\colon S^v_{L}|_{D_1, i}|_{D_1\cap D_2}\rightarrow S^v_{L}|_{D_2, i}|_{D_1\cap D_2}$. 
\end{enumerate}

\begin{lemma}
The sheaves $S^v_{L}|_{D,i}$'s can be glued up by $\varphi$'s and give an object in $\Sh_{\tau>0}^\bR(( C\bs (M\cup \Zero(\cM)))\times \bR_t)$. We denote this object by $S^{\shbarr}_{\cM}|_{C\bs (M\cup \Zero(\cM))}$.
\end{lemma}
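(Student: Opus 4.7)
The plan is to verify that the family $\{S^v_L|_{D,i}\}$ together with the three types of transition isomorphisms $\varphi_{ii'}$, $\varphi_{v_1v_2}$, $\varphi_{D_1,D_2}$ forms a descent datum for $\Sh^{\bR}_{\tau>0}((C\bs (M\cup \Zero(\cM)))\times \bR_t)$ with respect to the open cover by slightly enlarged Stokes regions $\{D'\}$. The indices $(v,i)$ merely label choices of WKB trivialization on a fixed region $D$, so the essential content is (i) pairwise commutativity of the three kinds of $\varphi$ on overlaps, and (ii) triviality of the composite monodromy produced by transporting the data once around each turning point.

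Two of the compatibilities are essentially formal. First, $\varphi_{ii'}$ is scalar multiplication by $\sqrt{-1}$ on both summands of $\bigoplus_\pm\bK_{D^v_\pm}$, so it commutes with the other two transitions, which are $\bK$-linear combinations of the identities $\id_\pm$, the canonical inclusion morphism $e$, and $t$-translations $T_c$. Second, $\varphi_{v_1v_2}$ commutes with $\varphi_{D_1,D_2}$ by direct inspection of the explicit formulas: the Voros matrix of Theorem~\ref{Voros} is stated in a normalization-independent form, and the $\pm$-asymmetric $t$-shifts built into $\varphi_{v_1v_2}$ are compatible with the $t$-level jumps that define $e$ on a Stokes edge, so the resulting square of transitions commutes.

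The main obstacle is the remaining cocycle relation around a simple turning point $v$. In a deleted neighborhood of $v$, three Stokes curves divide the punctured disk into three Stokes regions $D_1, D_2, D_3$, and the local data descend to a sheaf on this deleted neighborhood if and only if the monodromy $\varphi_{D_3,D_1}\circ \varphi_{D_2,D_3}\circ\varphi_{D_1,D_2}$ agrees with the sheet-change $\varphi_{ii'}$ produced by transporting $\tilde z$ once counterclockwise around $v$. This is the classical monodromy computation for exact WKB at a simple turning point: the three Voros matrices of Theorem~\ref{Voros} are alternately upper- and lower-triangular according to the sign of $\Re\int_v^z\sqrt{Q_0}/\shbarr\,dz$ on each of the three Stokes curves, and their product, combined with the branch jump of the prefactor $1/\sqrt{P_{od}}\sim (z-v)^{-1/4}$ around the simple zero $v$, reproduces precisely the scalar $\sqrt{-1}\cdot\id$ that defines $\varphi_{ii'}$. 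Once this identity is verified, all cocycle conditions hold and the local pieces glue to the claimed object; the microsupport condition $\tau>0$ and the trivial $\bR_t$-equivariant structure pass to the glued sheaf since both are local and hold on each $S^v_L|_{D,i}$.
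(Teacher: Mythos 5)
Your argument is correct in substance and is an expanded version of what the paper compresses into one sentence (``clear from the consistency of the connection formulas''): every $\varphi$ is a transcription of an identity among actual resummed WKB solutions, so all compatibilities hold automatically, and your first two paragraphs just verify the easy ones by hand. The one conceptual slip is in your third paragraph: the lemma only asserts gluing over $C\bs(M\cup\Zero(\cM))$, and with the regions $D'$ chosen to exclude the turning points the cover has no nonempty triple overlaps near a turning point, so descent on the deleted neighborhood holds for \emph{any} choice of edge isomorphisms --- your ``if and only if'' is false in the ``only if'' direction. The composite $\varphi_{D_3,D_1}\circ\varphi_{D_2,D_3}\circ\varphi_{D_1,D_2}$ compared with $\varphi_{ii'}$ is not a cocycle condition for this lemma but the \emph{monodromy} of the glued sheaf around the puncture; its triviality is exactly what the paper invokes in the next step (``there are no monodromies of the solutions'') to compute $\Ext^1(\bK_v\boxtimes\bK_{t\geq 0}^{\oplus 2}, s^v_L)=\bK^{\oplus 2}$ and extend over the turning point. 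So you have proved more than the lemma requires, and the extra computation (the product of the three alternately triangular Voros matrices combined with the branch jump of $(z-v)^{-1/4}$) is correct and useful --- it just belongs to the extension step rather than here. What the lemma does require, and what you correctly identify, is the mutual consistency of the index-changing isomorphisms $\varphi_{ii'}$, $\varphi_{v_1v_2}$, $\varphi_{D_1,D_2}$ on overlaps so that the object is independent of the choices of $v$ and $i$; for that, the cleaner route is the paper's: each relation is the image under the functor $\bK_{(\cdot)}\mapsto$ (gluing datum) of an identity between genuine flat sections, hence holds on the nose.
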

\begin{proof}
This is clear from the consistency of the connection formulas.
\end{proof}

We would like to extend the object to $C\bs M$. Around a turning point, we have three Stokes regions $D_1, D_2, D_3$. Hence $s^v_{L}|_{D_j,i}$ for $j=1,2,3$ will glue up to a sheaf $s^v_{L}$ and $S^v_{L}|_{D_1\cup D_2\cup D_3}=\bigoplus_{c\in \bR} T_cs^v_L$. Note that there are no monodromies of the solutions, hence we have 
\begin{equation}
    \Ext^1\lb \bigoplus_{c\in \bR}\bK_v\boxtimes \bK_{[c, \infty)}^{\oplus 2}, \bigoplus_{c\in \bR}T_c s^v_{L}\rb=\Lambda_0^{\oplus 2}.
\end{equation}
where $\bK_v$ is the skyscraper sheaf. By using $1\oplus 1$ in this extension space for each turning point, we get an object on $(C\bs M)\times \bR_t$. By the zero extension, we get an object $S^{\shbarr}_{\cM}\in \Sh^\bR_{\tau>0}(C\times \bR_t)$. As a summary, we have the following:

\begin{theorem}
We have a simple sheaf quantization $S^\shbarr_{\cM}\in \Sh_{\tau>0}^\bR(C\times \bR_t)$ of $L$ at $\hbarr$.
\end{theorem}
\begin{proof}
    Outside the turning points, the microsupport estimate follows from Lemma~\ref{lem:microsuppestimate1} and, the object is simple by the computation from Example~\ref{ex:graphmicrolocalization}. Around the turning points, by construction, the sheaf takes the form of $\bigoplus_{c\in \bR}T_c\cE$ where $\cE$ is the exact sheaf quantization of the local model of the vertex in \cite[Figure in p.8, and \S 4.1.2]{treumann2017cubic}, hence the desired microsupport estimate and the simpleness hold also around the turning points. This completes the proof.
\end{proof}

\subsection{Voros symbol}
We would like to recall the notion of Voros symbol. We refer to \cite{DDP} and \cite{IwakiNakanishi} for more accounts.

The ``dual" of a segments-free $\hbarr$-Stokes graph gives an ideal triangulation of $C$ with vertices in $M$. In this situation, $H_1(L, \bZ)$ is spanned by the following~\cite{BridgelandSmith}:
\begin{enumerate}
    \item The pull back of cycles in $C\bs M$.
    \item In an $\hbarr$-Stokes region of horizontal strip type, take a path connecting two turning points. This is lifted to a cycle in the spectral curve. We call this cycle, a {\em Voros edge cycle}.
\end{enumerate}
\begin{definition}
For a cycle $\gamma\in H_1(L, \bZ)$, the Voros symbol is defined by
\begin{equation}
    V_{\gamma}(\hbarr):=\oint_\gamma P_{od}(\gamma, \hbarr). 
\end{equation}
The local system on $L$ whose monodromy along $\gamma$ given by $e^{V_\gamma(\shbarr)}$ is called the $\hbarr$-Voros local system.
\end{definition}
In \cite{IwakiNakanishi}, it was proved that the collection $\{e^{V_\gamma(\shbarr)}\}$ is a set of cluster variables. Regarding this aspect, we sometimes call the set $\{e^{V_\gamma(\shbarr)}\}$ (or the Voros local system) {\em Voros--Iwaki--Nakanishi coordinate}.

Note that there is another class of Voros symbols associated to {\em paths}. We do not recall it here, since we do not use it in this paper.

\subsection{Microlocalization}
We would like to equip $L$ with a Maslov grading and a relative Pin structure. 

We will use the standard Maslov map $\alpha$ on $T^*C$~\cite{NZ}. Then the zero section is canonically equipped with a Maslov grading $\tilde{\alpha}=0$. For any Riemann surface, the second Stiefel--Whitney class always vanishes. Hence the notion of relative Pin structures and Spin structures coincide.

Let $B$ be the set of branched points in $L$.
Since we have a trivial grading on the zero section, we can induce a grading on $L\bs B$. In other words, we have a lift of $L\bs \pi^{-1}(M)\rightarrow U(1)$ to $L\bs B\rightarrow \bR$. The obstruction class $c\in H^1(L, L \bs B ,\bZ)$ to extend this lift to $L$ is zero, since the grading can be extended to $B=\Zero(\cM)$ on the base space. Hence we have a unique extension.

For a spin structure, we use $\sqrt{K}|_{C\bs M}$ to have a spin structure on $L\bs B$. We have a diagram
\[
\xymatrix{
L\bs B \ar[d] \ar[r] & B\mathrm{Spin}(2) \ar[d]  \\
L \ar[r] & B\mathrm{SO}(2).
}
\]
The obstruction to extend the spin structure lives in $H^2(L, L\bs B, \bZ/2\bZ)$. Again, the obstruction vanishes. The classes of the extensions is a torsor over $H^1(L, L\bs B, \bZ/2\bZ)$. We will choose one in the proof of the following corollary.

\begin{corollary}
The $0$-th cohomology of the microlocalization of $S^\hbarr_{\cM}$ along a brane structure is the Voros local system.
\end{corollary}
\begin{proof}
We explain with Picture~\ref{Diagram}. In this picture, $v, w$ are turning points. Lines emanating from turning points are Stokes lines. 

The inverse image of the dotted line connecting $v$ and $w$ along the projection $\pi_L\colon L\rightarrow C$ is a Voros edge cycle. We would like to compute the monodromy of the microlocalization of $\cS^\hbarr_\cM$ along the Voros edge cycle.

Take a circle $\gamma'$ encircling the dotted line. Take a connected component of $\pi_L^{-1}(\gamma)$, and name it $\gamma$. The $\gamma$ is projected down to the circle in Figure~\ref{Edge}.
We denote the sheet of $L|_{D_3}$ above the dotted line over where $\gamma$ is lying by $L_{3+}$. Below the dotted line, $\gamma$ is lying over the other sheet $L_{3-}$. Running around the tuning points change the sheets. 

Since $\gamma$ is homotopy equivalent to the Voros edge cycle, we compute the monodoromy along $\gamma$.
we can reduce the monodromy computation
\begin{figure}[htbp]
\begin{center}
\includegraphics[width=7.0cm]{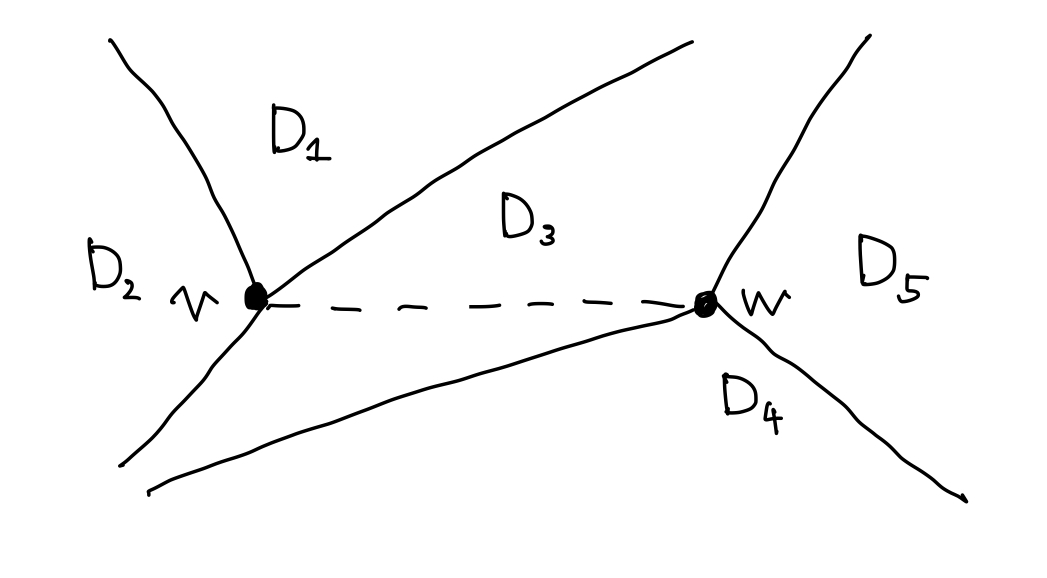}
\end{center}
\caption{Stokes diagram}\label{Diagram}
\end{figure}
\begin{figure}[htbp]
\begin{center}
\includegraphics[width=7.0cm]{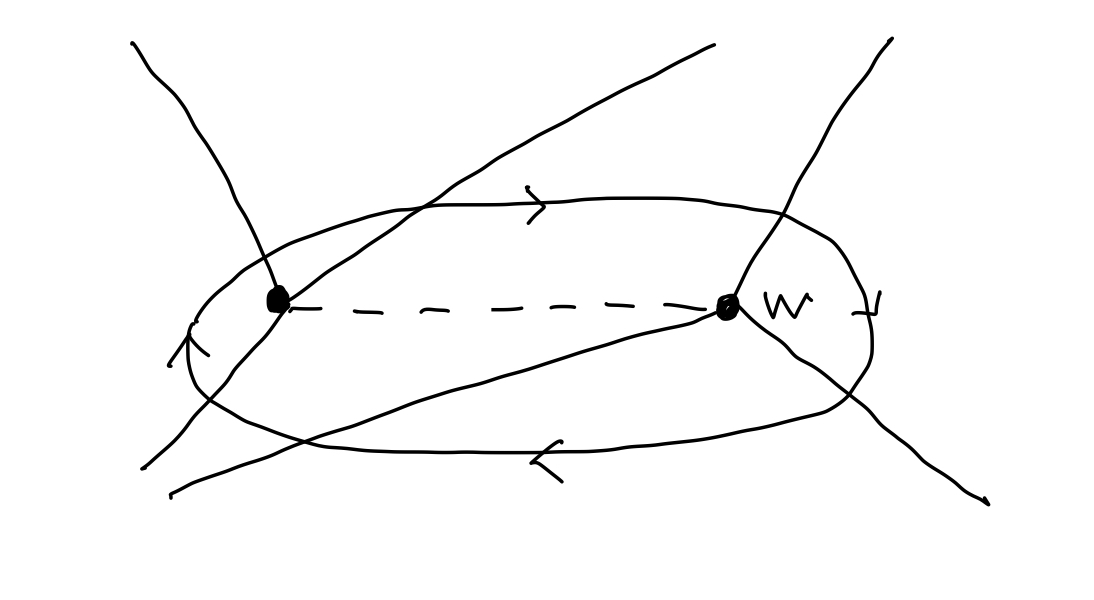}
\end{center}
\caption{Edge cycle}\label{Edge}
\end{figure}
Let us first consider over $D_3$ with the normalization $v$. Namely, we consider the trivialization $\cS^\hbarr_\cM|_{D_3}\cong \bigoplus_{c\in \bR} T_cs^v_{L}|_{D,i}$. Then we get the rank 1 constant sheaf $\bK_{L_{3+}}$ on $L_{3+}$ by the microlocalization as in Example~\ref{ex:graphmicrolocalization}. Next, we consider over $D_3$ with the normalization $w$. We similarly get $\bK_{L_{3+}}$ as a result of the microlocalization. By (\ref{eq:changeofnormal}), these two constant sheaves are glued by the multiplication of $\exp\lb \int_{\gamma_{vw}}{P_{od}}\rb$. 

Similarly, over $D_5$ and $D_4$, we get the rank 1 constant sheaves $\bK_{L, 4+}$ and $\bK_{L_5+}$ under the microlocalization. From the Voros connection formula, they are glued with $\bK_{L_{3+}}$ possibly multiplied with $\pm 1$.

Returning to $v$, since we are now going back over the other sheet, the sheaves are glued by the multiplication of $\exp\lb \int_{\gamma_{wv}}{P_{od}}\rb$. Again, tuning around the turning point possibly multiply $\pm 1$. 

Hence we get the Voros monodromy up to sign. One can fix the sign by retaking the spin structure. 
\end{proof}

\begin{remark}
A Stokes saddle is a Stokes line ends in a turning point. In our construction, we choose a generic $\hbarr$ which does not have any Stokes saddles. If one varies $\hbarr$, then the microlocalization of the sheaf quantization varies smoothly unless $\hbarr$ does not meet a point with Stokes saddles. If $\hbarr$ crosses a point where it has a Stokes saddle, the microlocalization jumps following a cluster transformation. This is an interpretation of the result by Iwaki--Nakanishi~\cite{IwakiNakanishi}.
\end{remark}

\section{Irregular Riemann--Hilbert correspondence}
In this section, we would like to relate our sheaf quantization to some known formalisms of irregular Riemann--Hilbert correspondence. In the following, we fix a sufficiently small $\hbarr\in \bC^\times$. In this section, it is convenient to think $S^\shbarr_{\cM}$ as a sheaf quantization of $L/\hbarr$ at $\hbarr=1$ with respect to the standard symplectic structure. 

\subsection{D'Agnolo--Kashiwara's formalism}
We would like to relate our sheaf quantization to D'Agnolo--Kashiwara's holonomic Riemann--Hilbert correspondence~\cite{DK}. For the details, we refer to \cite{DK, KSregularandirregular}. 

We briefly recall the statement of \cite{DK}. For a complex manifold $X$, let us denote the derived category of holonomic $\cD$-modules by $D^b_{hol}(\cD_X)$. 

For the topological side, we introduce the notion of enhanced ind-sheaves. Let $\overline{\bR}$ be the two point compactification of $\bR$ i.e., $\bR\cong (0,1)\hookrightarrow [0,1]=\overline{\bR}$. The category of enhanced ind-sheaves is defined in two steps: First, we set 
\begin{equation}
D^b(\mathrm{I}\bC_{X\times(\overline{\bR},\bR)}):=D^b(\mathrm{I}\bC_{X\times\overline{\bR}})/D^b(\mathrm{I}\bC_{X\times{\overline{\bR}\bs \bR}})
\end{equation}
where $D^b(\mathrm{I}\bC_X)$ is the bounded derived category of ind-sheaves over $X$. 
We set $\bC_{t\lesseqgtr 0}:=\bC_{\lc (x,t)\in X\times \overline{\bR}\relmid t\in \bR, t\lesseqgtr 0 \rc}$. We denote the convolution product along $\bR_t$ by $\potimes$. We set 
\begin{equation}
\mathrm{IC}_{t^*=0}:=\lc K\relmid K\potimes \bC_{\leq 0}\simeq 0, K\potimes \bC_{\geq 0}\simeq 0\rc.
\end{equation}
The category of enhanced ind-sheaves over $X$ is defined by 
\begin{equation}
E^b({\mathrm{I}\bC_X}):=D^b(\mathrm{I}\bC_{X\times (\overline{\bR}, \bR)})/\mathrm{IC}_{t^*=0}.
\end{equation}
We set
\begin{equation}
\bC^E_X:=\underset{a \rightarrow\infty}{\forlim}\bC_{t\geq a}
\end{equation}
as an object of $E^b({\mathrm{I}\bC_X})$. As usual, $\forlim$ means Ind-colimit. An object of $E^b({\mathrm{I}\bC_X})$ is said to be $\bR$-constructible if there locally exists an $\bR$-constructible sheaf $\cE$ and the object is isomorphic to $\cE\potimes \bC^E_X$. 
\begin{theorem}[\cite{DK}]
There exists a contravariant fully faithful functor $\mathrm{Sol}^E\colon D^b_{hol}(\cD_X)\hookrightarrow E^b(\mathrm{I}\bC_X)$.
\end{theorem}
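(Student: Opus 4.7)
The approach I would take follows the strategy of D'Agnolo--Kashiwara, reducing the statement to a combination of the regular holonomic case and the Kedlaya--Mochizuki structure theorem for irregular meromorphic connections. First, I would construct the functor $\mathrm{Sol}^E$ using an enhanced version of the tempered holomorphic structure sheaf: one builds $\cO^E_X \in E^b(\mathrm{I}\bC_X)$ whose sections on $U \times \{t \geq a\}$ encode holomorphic functions on $U$ whose exponential growth is controlled by $a$, and sets $\mathrm{Sol}^E(\cM) := R\mathcal{H}om_{\cD_X}(\cM, \cO^E_X)$. The $t$-direction is precisely what allows exponential factors $e^{-f/\hbar}$ to be remembered rather than discarded as in the classical solution functor. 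A necessary technical preliminary is to establish that $\mathrm{Sol}^E(\cM)$ lands in $\bR$-constructible enhanced ind-sheaves, and that $\mathrm{Sol}^E$ is compatible with proper pushforward, inverse image, duality, and tensor product.

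To prove fully faithfulness, I would use d\'evissage. For regular holonomic $\cM$, the classical Kashiwara--Mebkhout theorem essentially gives the result: $\mathrm{Sol}^E$ restricted to $D^b_{rh}(\cD_X)$ factors as the ordinary (constructible) solution functor followed by the embedding $\cF \mapsto \cF \potimes \bC^E_X$, so fully faithfulness reduces to the classical regular Riemann--Hilbert equivalence. For general holonomic $\cM$, I would appeal to Kedlaya--Mochizuki: after a suitable sequence of blow-ups and ramified coverings $\pi\colon X' \to X$, the pullback $\pi^* \cM$ admits good formal structure along an SNC divisor, meaning its formalization decomposes as $\bigoplus_i \cE^{f_i} \otimes \cR_i$ with $\cR_i$ regular and $\cE^{f_i} = (\cO, d + df_i)$. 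Using the six-functor compatibility of $\mathrm{Sol}^E$ together with adjunction, fully faithfulness for $\cM$ reduces to the statement for the good model on $X'$.

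In the good model case, the enhanced solution complex can be computed sector by sector around the irregular divisor: the summand corresponding to $\cE^{f_i} \otimes \cR_i$ contributes a sheaf essentially supported on the locus $\{t + \Re f_i \geq 0\}$, twisted by the solution sheaf of $\cR_i$. The Stokes structure gluing these sectorial pieces is exactly what is encoded by the enhancement and can be recovered from $\mathrm{Sol}^E$ via the Malgrange--Sibuya correspondence. Thus $\mathrm{Hom}_{E^b(\mathrm{I}\bC_X)}(\mathrm{Sol}^E(\cN), \mathrm{Sol}^E(\cM))$ reproduces $\mathrm{Hom}_{D^b_{hol}(\cD_X)}(\cM, \cN)$ for good models. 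Combined with d\'evissage this yields the theorem.

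The main obstacle is the foundational setup: constructing $\cO^E_X$ and verifying its six-operation compatibilities is where the bulk of the work lies. In particular, showing that the enhancement interacts well with ramified pullbacks and blow-ups (so that Kedlaya--Mochizuki can be invoked) requires controlling how tempered asymptotics transform under such operations, and one must verify that the $t$-direction correctly captures the Stokes filtration on formalizations rather than just the underlying sheaf of horizontal sections. Once these foundational results are in place, the sector-by-sector matching in the good model case is an essentially explicit computation.
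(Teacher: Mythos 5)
This theorem is quoted in the paper directly from \cite{DK}; the paper gives no proof of its own, so there is nothing internal to compare against. Your sketch is a faithful outline of the actual strategy of D'Agnolo--Kashiwara: the enhanced tempered structure sheaf $\cO^E_X$, the six-operation compatibilities, d\'evissage to good formal models via Kedlaya--Mochizuki, and the explicit computation of $\mathrm{Sol}^E$ on exponential twists $\cE^{f}$ (yielding objects of the shape $\bC_{\{t+\Re f\geq 0\}}$) are indeed the main ingredients of the cited proof. Two small caveats: the exponential factors in \cite{DK} are $e^{f}$ with no $\hbar$ (you have imported the $\hbar$ from the surrounding paper), and fully faithfulness there is established by a reconstruction theorem (a quasi-inverse on the essential image built from $\cO^E_X$) rather than by an appeal to Malgrange--Sibuya; but these do not change the overall shape of the argument.
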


Let us construct enhanced ind-sheaves by modifying our construction of sheaf quantization.

We recall our construction of sheaf quantization: It locally consists of 
$S^v_{L}|_{D,i}=\bigoplus_{c\in \bR} T_cs^v_{L}|_{D,i}$. They are glued together by the morphisms appeared in \S 8.1. Each of the gluing morphisms takes the form
\begin{equation}
    \bigoplus_{c\in \bR} T_c(\bK_{A}\oplus \bK_B)\rightarrow \bigoplus_{c\in \bR} T_c(\bK_{A}\oplus \bK_B)
\end{equation}
for some constant sheaves $\bK_A, \bK_B$. By the definition of the gluing morphisms, each of them is the direct sum of the translations ($=\bigoplus_{c\in \bR}T_c$) of a morphism of the form
\begin{equation}
     \bK_A\oplus \bK_B\rightarrow T_c\bK_A\oplus T_{c'}\bK_B.
\end{equation}
for some $c,c'\in \bR$. Applying $\potimes \bC^E_M$, we obtain an isomorphism
\begin{equation}
    (\bK_A\oplus \bK_B)\potimes \bC^E_M\rightarrow (T_c\bK_A\oplus T_{c'}\bK_B)\potimes \bC^E_M\cong (\bK_A\oplus \bK_B)\potimes \bC^E_M.
\end{equation}
Hence, these morphisms give the gluing morphisms of $s^v_{L}|_{D,i}\potimes \bC^E_M$. We denote the resulting enhanced ind-sheaf by $E(S^\hbarr_\cM)$. 
\begin{proposition}
We have $\mathrm{Sol}^E(\cM^\hbarr)\cong E(S^\hbarr_{\cM})$.
\end{proposition}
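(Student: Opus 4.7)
The plan is to verify the isomorphism locally on each Stokes region of $\cM$ and then match the gluing data on both sides, with the matching across Stokes edges reducing to Voros' connection formula (Proposition~\ref{Vorosnaive}). On a fixed Stokes region $D$, the resummation of the WKB solutions at $\hbarr$ provides a flat frame $(\Psi^D_+, \Psi^D_-)$ for $\cM^\hbarr|_D$ of the form
\begin{equation}
    \Psi^D_\pm = \exp\lb \int_{z_D}^z \xi_\pm/\hbarr\, dz \rb \cdot u^D_\pm,
\end{equation}
where $u^D_\pm$ is a convergent holomorphic function. After the change of basis given by $(u^D_+)^{-1} \oplus (u^D_-)^{-1}$, the connection $\cM^\hbarr|_D$ becomes a direct sum of rank one exponential connections $\cE^{F^D_+}\oplus \cE^{F^D_-}$ with $F^D_\pm(z) := \int_{z_D}^z \xi_\pm/\hbarr\, dz$. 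Applying the standard D'Agnolo--Kashiwara formula
\begin{equation}
    \mathrm{Sol}^E\lb\cE^{F^D_\pm}\rb \simeq \bC_{\{t + \Re F^D_\pm \geq 0\}} \potimes \bC^E_D
\end{equation}
yields a canonical local isomorphism between $\mathrm{Sol}^E(\cM^\hbarr)|_D$ and $E(S^\hbarr_\cM)|_D = \bigoplus_\pm \bC_{D_\pm} \potimes \bC^E_D$.

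Next, I would verify that these local isomorphisms glue across Stokes edges. Across the edge separating adjacent regions $D_1$ and $D_2$, Proposition~\ref{Vorosnaive} says that the two frames are related by a matrix of the form $\mathrm{diag}(d_1, d_2) + a \cdot e$, with the off-diagonal entry pointing from the dominant sheet to the subdominant one. On the enhanced ind-sheaf side, this off-diagonal morphism is realized by the canonical inclusion-of-closed-sets map between $\bC^E_{\{t+\Re F_+\geq 0\}}$ and $\bC^E_{\{t+\Re F_-\geq 0\}}$, which is precisely the basis vector $e$ appearing in $\varphi_{D_1,D_2}$. The scalar coefficients $d_1, d_2, a$ agree on both sides, since on the sheaf quantization side they are literally extracted from the connection formula for the resummed WKB solutions. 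The rescaling $\tilde f = \sum T^{c'-c}e^{c_{ij}-c}f_c$ of Lemma~\ref{En} is exactly what converts the Novikov-parameter gluing into the genuine exponential decay encoded in the supports on the enhanced side.

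To conclude, I would handle the turning points and the poles. Around a turning point there are no local monodromies on either side, so both sheaves are determined as the unique extension across the turning point of their restriction to the complement; this matches the $1\oplus 1$ extension in $\Ext^1(\bC_v \boxtimes \bC_{t\geq 0}^{\oplus 2}, s_L)$ used to define $S^\hbarr_\cM$. At a pole $p\in M$, WKB-regularity (Assumption~\ref{WKBregular}) together with Proposition~\ref{constantformal} identifies the irregularity type of $\cM^\hbarr$ with $\xi_\pm/\hbarr$, so that $\mathrm{Sol}^E(\cM^\hbarr)$ near $p$ is locally modelled by $\bigoplus_\pm \bC^E_{\{t + \Re \int \xi_\pm/\hbarr \geq 0\}}$; both sides then extend by zero across $M$.

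The main obstacle is the careful bookkeeping required to convert the Novikov normalization into the exponential normalization of enhanced ind-sheaves: on the former side, $T^c$ is a formal symbol arising from the $\bR_t$-equivariant structure, while on the latter side it must be realized as an actual translation of a support condition by $c \in \bR$. The explicit rescaling in Lemma~\ref{En}, combined with the convergence of the non-leading WKB factors $u^D_\pm$ guaranteed by Theorem~\ref{Koike}, is what makes this conversion well-defined. Verifying that the rescaled morphisms $\tilde f$ indeed descend to a globally well-defined isomorphism in $E^b(\mathrm{I}\bC_C)$, rather than merely an object-wise identification, is the principal technical hurdle.
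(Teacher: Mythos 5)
Your proposal is correct and follows essentially the same route as the paper, whose proof is simply the observation that $S^{\shbarr}_{\cM}$ is finite and that its gluing data are the Stokes data of $\cM^{\shbarr}$, so the claim reduces to D'Agnolo--Kashiwara's description of $\mathrm{Sol}^E$ for meromorphic connections. Your write-up unpacks exactly that citation (local exponential models on Stokes regions, matching of Stokes matrices with the $\varphi_{D_1,D_2}$, and the extension across turning points and poles), so it is a more detailed version of the same argument rather than a different one.
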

\begin{proof}
Each gluing morphism $\bK_A\oplus \bK_B\rightarrow T_c\bK_A\oplus T_{c'}\bK_B$ is given by the connection matrix between the bases of solutions given by resummed WKB solutions with certain normalizations. Here the normalizations are specified in \S 8.1 in terms of turning points denoted by $v,v_1, v_2$. Also, around the poles, the connection matrices are precisely Stokes matrices, since Proposition~\ref{constantformal} tells us that the resummed WKB solutions form a basis compatible with the Stokes filtration.

Hence the induced isomorphisms $(\bK_A\oplus \bK_B)\potimes \bC^E_M\rightarrow (\bK_A\oplus \bK_B)\potimes \bC^E_M$ constructed above is given by connection/Stokes matrices of the given flat connection $\cM^\hbarr$. 

On the other hand, \cite[\S 9.8]{DK} tells us that this gluing of enhanced ind-sheaves by connection/Stokes matrices is precisely their solution sheaf. This completes the proof.
\end{proof}

\begin{remark}
    If one restricts $\Sol^E(\cM^\hbarr)$ to the complement of the divisor, we obtain the usual solution local system of the given differential equation. In this way, we can recover the usual solution local system from $S^\hbarr_\cM$.
\end{remark}

\begin{remark}
It is desirable to make our construction here functorial.
\end{remark}

\subsection{Irregularity knots and non-Novikovized quantization}

We next compare our sheaf quantization with the Legendrian knot description of Stokes data~\cite{STWZ}. In this section, we set $L_{\cM}:=L$.

Let us fix a divisor $M$ on $C$. Fix a positive integer $k$. For each point $p\in M$, we fix an irregularity type of rank $k$. We denote this data $F_M$. Consider meromorphic connections with the given irregularity type $F_M$. 

We first define the notion of irregularity knot. For $p\in M$, we denote the irregularity type of $p$ by $\{g_1,..., g_k\}$. Take a small disk $D_p$ around the singularity and fix a coordinate $z$ on the disk. For each $g_i$, we give an immersion of $\sqcup_k S^1$ in $D_p$ by 
\begin{equation}
    \bigcup_{i=1}^k\lc(\theta, \epsilon\cdot \Re(g_i(e^{\sqrt{-1}\theta}/r))\relmid \theta\in S^1\rc
\end{equation}
in the polar coordinate. Here $\epsilon$ is taken sufficiently small so that the immersion is into $D_p$ and $r$ is some large positive number. Putting the outward conormals on the immersion, we have a Legendrian link on the cosphere bundle $S^*D_p$. For a sufficiently large $r$, the Legendrian isotopy class of the link does not depend on $r$. We call this link, the irregularity knot and denote it by $L_{\{g_i\}}$. We also identify the Legendrian link with the associated conical Lagrangian in $T^*D_p\subset T^*C$. 

To sum up, associated to the given data of irregularity types $F_M$, we get a conical Lagrangian $L_{F_M}$ contained in $\bigcup_{p\in M}T^*D_p\subset T^*C$.

We denote the following category by $\Sh^1_{L_{F_M}}(C)$: The subcategory of constructible sheaves over $C$ consisting of the objects satisfying the following conditions:
\begin{assumption}\label{assump1}
\begin{enumerate}
    \item microsupports are contained in $L_{F_M}\cup T^*_CC$,
    \item microstalks are rank 1 and concentrated in degree 0,
    \item stalks are zero at any $p\in M$.
\end{enumerate}
\end{assumption}

\begin{theorem}[\cite{STWZ}]\label{STWZRH}
There exists an equivalence between the category of meromorphic connections with the irregularity types of $F_M$ and $\Sh^1_{L_{F_M}}(C)$. We denote the equivalence functor by $\Sol_{STWZ}$.
\end{theorem}

We relate this equivalence with our sheaf quantization by slightly modifying the construction of $S^{\hbarr}_{\cM}$ around the poles. 

Let $p$ be a pole of the differential equation and take a local coordinate $z$ around $p$. Then we consider the following curved cylinder
\begin{equation}
    \cC_p:=\lc(z,t)\relmid |z-p|\leq \eta \rc
    \cup \lc (z, t)\relmid \substack{|z-p|>\eta, 0< \epsilon(|z-p|-\eta)< \pi,\\t\geq \tan(-\pi/2+\epsilon(|z-p|-\eta))}\rc \subset C\times \bR_t
\end{equation}
where $\epsilon$ (resp. $\eta$)is sufficiently large (resp. small) positive number. On each Stokes region $D$, we take 
\begin{equation}
    D^\pm:=\pi^{-1}\pi(\lc(x, t)\in D \times \bR_t\relmid t\geq -\Re \int \frac{\xi_\pm}{\hbarr}dz\rc \bs \bigcup_{p\in \overline{D}}\cC_{p})\cap \lc(x, t)\in D\times \bR_t\relmid t\geq -\Re\int \frac{\xi_\pm}{\hbarr}dz\rc.
\end{equation}
where $\pi\colon D\times \bR\rightarrow D$ is the projection. Figure~\ref{modification} is an example of the situation where the gray shaded region is an example of $D^\pm$.
\begin{figure}[htbp]
\begin{center}
\includegraphics[width=10.0cm]{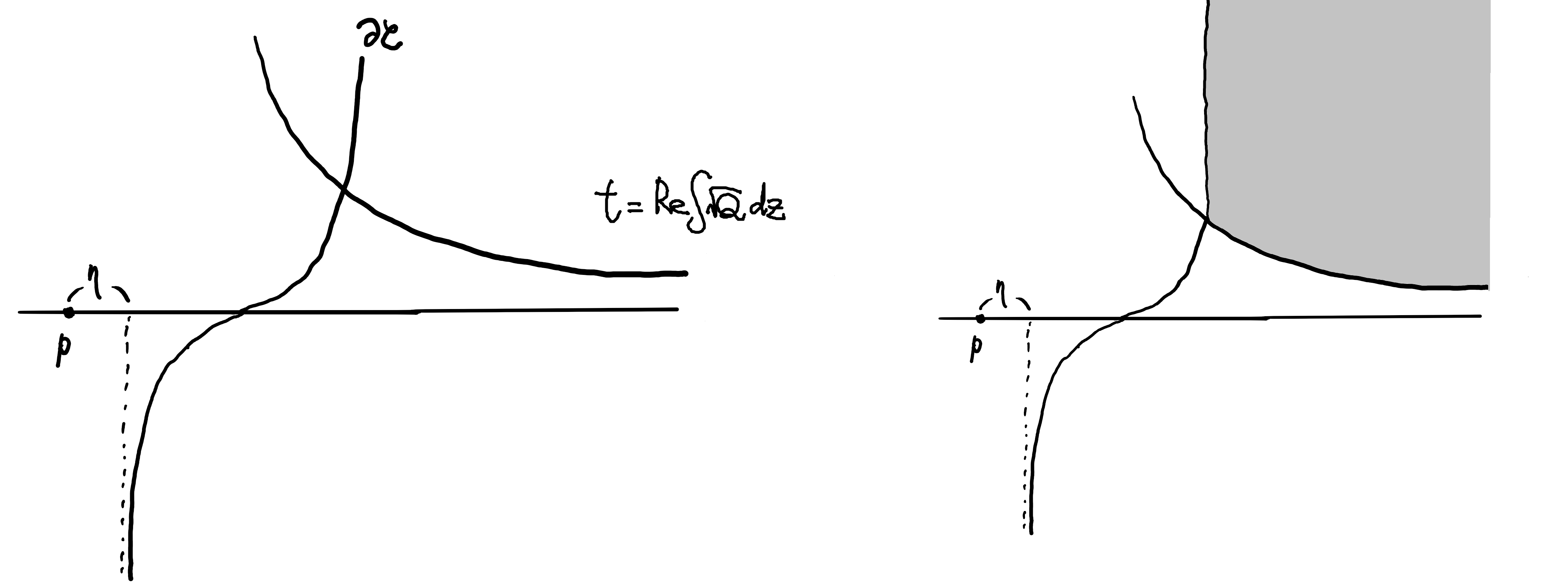}
\end{center}
\caption{An example of $D^\pm$}
\label{modification}
\end{figure}
We set
\begin{equation}
    S_{L_c}^D:=\bigoplus_{c\in \bR} T_c\lb\bigoplus_\pm \bK_{D^\pm}\rb.
\end{equation}

We can glue up this by the same rule as in the construction of $S^{\shbarr}_{\cM}$. We denote the resulting sheaf $S^{\shbarr}_{\cM, c}$ and we set $L_{\cM, c}:=\musupp(S^{\shbarr}_{\cM,c})$. Note that $L_{\cM, c}$ is conical and equal to $L_{F_M}$ outside some neighborhood of the zero section.

\begin{proposition}\label{conicVoros}
\begin{enumerate}
    \item There exists a Lagrangian homotopy between $L_\cM/\hbarr$ and $L_{\cM,c}$. 
    \item The microlocalization of $S^{\shbarr}_{\cM, c}$ is the same as the microlocalization of $S^{\shbarr}_{\cM}$ along the Lagrangian homotopy.
\end{enumerate}
\end{proposition}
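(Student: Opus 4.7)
The plan is to prove both statements simultaneously by constructing a one-parameter family of sheaves $\{S^{\shbarr}_{\cM, s}\}_{s \in [0,1]}$ interpolating $S^\shbarr_\cM$ at $s=0$ and $S^{\shbarr}_{\cM, c}$ at $s=1$. First I would isolate the two modifications made in passing from $S^\shbarr_\cM$ to $S^{\shbarr}_{\cM, c}$: (a) the regions $D_\pm$ are replaced by $D^\pm$, cutting out the interior of a curved cylinder $\cC_p$ around each pole $p \in M$; and (b) the normalization gluing $\varphi_{v_1 v_2}$ acquires the extra scalar factor $\exp(\int_{\gamma_{v_1v_2}} \xi_\pm/\hbarr)$, i.e., the leading WKB exponent is absorbed into the transition map instead of being separated into the region boundaries and the B-field. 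Away from the poles the regions coincide, so $L_c$ and $L_\cM/\hbarr$ agree outside neighborhoods of $M$; near a pole the leaves $\zeta = \xi_\pm(z)/\hbarr$ of $L_\cM/\hbarr$ escape to infinity in the cotangent direction, and the cylinder cutoff pushes the microsupport to the conormal rays at large $|\zeta|$, producing the conic structure matching $L_{\{f_i\}}$ via Proposition~\ref{constantformal}.

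Next I would introduce the interpolation. Choose a smooth family $\{\cC_p^s\}_{s\in[0,1]}$ of cylinders with $\cC_p^0 = \emptyset$ and $\cC_p^1 = \cC_p$, and use this to define regions $D^\pm_s$. On each Stokes region set $S^{\shbarr}_{\cM, s}|_D := \bigoplus_c T_c \bigoplus_\pm \bK_{D^\pm_s}$; glue across Stokes edges and sheet changes by the unmodified $\varphi_{D_1,D_2}$ and $\varphi_{ii'}$; and glue across changes of normalization by $\varphi_{v_1v_2}^s := \exp(s\int_{\gamma_{v_1v_2}}\xi_\pm/\hbarr)\cdot \varphi_{v_1v_2}$. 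The cocycle relations are verified exactly as at $s=0,1$ since the extra factor depends only on endpoints. A direct computation of the boundaries of the $D^\pm_s$ yields a Lagrangian $L_s \subset T^*C$ varying continuously in $s$ with $L_0 = L_\cM/\hbarr$ and $L_1 = L_c$; this is the Lagrangian homotopy claimed in~(1). For~(2), since microlocalization is a local construction, it suffices to check on each Stokes region and each gluing: the microstalks of $\bK_{D^\pm_s}$ are simple with microsupport on the corresponding sheet of $L_s$ and vary continuously with $s$; the map $\varphi_{D_1,D_2}$ is unchanged, so the identifications of microstalks across Stokes edges are preserved; and the extra scalar $\exp(s\int\xi_\pm/\hbarr)$ in $\varphi_{v_1v_2}^s$ splits into a real part $\exp(s\Re\int\xi_\pm/\hbarr)$ absorbed by the $\bR_t$-equivariance, and an imaginary part $\exp(\sqrt{-1}s\Im\int\xi_\pm/\hbarr)$ cancelled by the B-field twist $\cE_L$ corresponding to $b = \sqrt{-1}\Im(\lambda/\hbarr)$ (see the redefinition $\mu(\cE)\otimes \cE_L$). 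Therefore $\mu(S^{\shbarr}_{\cM, s})$ is independent of $s$.

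The main obstacle is arranging the family $\{\cC_p^s\}$ so that $\{L_s\}$ is a genuine Lagrangian isotopy with well-defined Maslov grading and Pin structure --- in particular, avoiding tangencies with cotangent fibers or spurious components in the microsupport during the deformation. The WKB-regularity hypothesis, which pins down the asymptotic behavior of $\xi_\pm/\hbarr$ at each pole and matches it with that of the irregularity knot $L_{\{f_i\}}$ by Proposition~\ref{constantformal}, is precisely what makes such a choice possible; once it is set up, the brane data extend canonically along the resulting homotopy and both statements follow.
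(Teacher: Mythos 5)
Your proposal reaches the right conclusion but by a genuinely different, and much heavier, route than the paper. The paper's proof is a two-line observation: both $L_\cM/\hbarr$ and $L_c$ deformation-retract onto the common open piece $L_\cM\bs T^*\pi(\cC)$ (where $\cC=\bigcup_p\cC_p$), since the modification is confined to the cylinders around the poles; and the microlocalization, being a local system on the Lagrangian, is determined by its restriction to that common piece. No intermediate objects are constructed. You instead build a one-parameter family of sheaves $S^{\shbarr}_{\cM,s}$ by shrinking the cylinders and interpolating the gluing scalars. This buys an explicit Lagrangian isotopy rather than a mere homotopy, but it forces you to verify that every intermediate object is a well-defined simple object and that its microlocalization is constant in $s$ --- and that last step is where your argument is asserted rather than proved. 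The factor $\exp(s\int\xi_\pm/\hbarr)$ you insert into $\varphi_{v_1v_2}^s$ is a global modification on each horizontal strip, while the region change is local near the poles; so at intermediate $s$ the Voros-cycle monodromy naively picks up $e^{s\oint\xi/\hbarr}$ \emph{in addition to} the factor $T^{-\oint\Re(\xi/\shbarr)}e^{\sqrt{-1}\oint\Im(\xi/\shbarr)}$ already encoded in the $t$-shifts and the B-field, and these only agree after the substitution $T^c=e^{-c}$. In other words, the two microlocalizations are not literally equal as $\Lambda$-local systems varying in $s$; they are identified only under the specialization implicit in passing from the Novikov-graded quantization to the conic one. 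Your phrase ``absorbed by the $\bR_t$-equivariance'' papers over exactly this point. The paper avoids the issue entirely by comparing the two endpoints directly on $L_\cM\bs T^*\pi(\cC)$, which is the cleaner argument; if you want to keep your interpolation, you should state explicitly where the leading exponential lives at each $s$ and verify that the total monodromy (shift, B-field, and scalar combined, after $T^c=e^{-c}$) is $s$-independent.
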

\begin{proof}
\begin{enumerate}
    \item Since, the both $L_\cM$ and $L_{\cM, c}$ are Lagrangian homotopic to $L_\cM\bs T^*\pi(\cC)$ where $\cC=\bigcup_p \cC_p$.
    \item The microlocalization is determined by the microlocalization at $L_\cM\bs T^*\pi(\cC)$.
\end{enumerate}
\end{proof}

To relate $\cS^\hbarr_{\cM,c}$ to the STWZ functor, we consider the following construction. Let $\cE$ be an $\bR$-equivariant sheaf on $C\times \bR_t$. For an open subset $U$ of $C$, we set
\begin{equation}
    \cE'_\Lambda(U):=\Hom_{\Sh^\bR_{\tau >0}(U\times \bR_t)}(\bigoplus_{c\in \bR}\bK_{U\times [c, \infty)},\cE|_U)\otimes_{\Lambda_0}\Lambda.
\end{equation}
This assignment defines a presheaf on $C$ valued in $\Lambda$-vector spaces. We denote the sheafification of $\cE'_\Lambda$ by $\cE_\Lambda$.

\begin{proposition}\label{projection}
The sheaf $(S^\shbarr_{\cM, c})_{\Lambda}$ is isomorphic to $\Sol_{STWZ}(\cM^\hbarr)\otimes_\bC\Lambda$.
\end{proposition}
\begin{proof}
Note that $\pi(\{t=\Re\int\frac{\xi_\pm}{\shbarr}\}\cap \partial \cC)$ coincides with the projection of the irregularity knot. The gluing maps are responsible for Stokes phenomena. 
\end{proof}
\footnotesize
\bibliographystyle{alpha}
\bibliography{bibs.bib}

\noindent
Department of Mathematics, Graduate School of Science, Kyoto University, tatsuki.kuwagaki.a.gmail.com
\end{document}